\numberwithin{equation}{section}
\newtheorem{prop}{Proposition}[section]
\newtheorem{theo}[prop]{Theorem}
\newtheorem{lemm}[prop]{Lemma}
\newtheorem{coro}[prop]{Corollary}
\newtheorem{exam}[prop]{Example}
\newtheorem{claim}[prop]{Claim}
\theoremstyle{definition}
\newtheorem{defi}[prop]{Definition}
\newtheorem{rema}[prop]{Remark}
\newcommand{\NN}{\mathbb{N}}
\newcommand{\RR}{\mathbb{R}}
\newcommand{\cS}{\mathcal S}
\newcommand{\sF}{\mathscr{F}}
\newcommand{\sH}{\mathscr{H}}
\DeclareMathOperator{\Index}{index}
\DeclareMathOperator{\Span}{span}
\DeclareMathOperator{\supp}{supp}
\DeclareMathOperator{\Hess}{Hess}
\DeclareMathOperator{\Vol}{Vol}
\DeclareMathOperator{\vol}{vol}
\DeclareMathOperator{\dist}{dist}
\DeclareMathOperator{\ind}{index}
\DeclareMathOperator{\nul}{nullity}
\DeclareMathOperator{\Ric}{Ric}
\newcommand{\pa}[2]{\frac{\partial #1}{\partial #2}}
\newcommand{\paop}[1]{\pa{}{#1}}
\newcommand{\bangle}[1]{\left\langle #1 \right\rangle}
\numberwithin{equation}{section}
\begin{document}

\title{Index and topology of minimal hypersurfaces in $\RR^n$}

\author{Chao Li}
\address{Department of Mathematics, Stanford University}
\curraddr{Department of Mathematics,
Stanford University, CA, 94305}
\email{rchlch@stanford.edu}



\date{Feburary, 2016}

\begin{abstract}
In this paper, we consider immersed two-sided minimal hypersurfaces in $\RR^n$ with finite total curvature. We prove that the sum of the Morse index and the nullity of the Jacobi operator is bounded from below by a linear function of the number of ends and the first Betti number of the hypersurface. When $n=4$, we are able to drop the nullity term by a careful study for the rigidity case. Our result is the first effective Morse index bound by purely topological invariants, and is a generalization of \cite{li2002minimal}. Using our index estimates and ideas from the recent work of Chodosh-Ketover-Maximo \cite{chodosh2015minimal}, we prove compactness and finiteness results of minimal hypersurfaces in $\RR^4$ with finite index.
\end{abstract}

\maketitle

\section{Introduction}
Minimal hypersurfaces of the Euclidean spaces $\RR^n$ are critical points of the area functional. The Jacobi operator from second variation of area functional gives rise to the Morse index of the minimal surface. In Euclidean spaces $\RR^n$, the second variation formula for a two-sided minimal hypersurface $\Sigma$ is given by
\[Q(f,f)=\int_{\Sigma}|\nabla f|^2-|A|^2f^2.\]
It induces a second order elliptic operator
\[J(f)=\Delta f+|A|^2f,\]
where $|A|^2$ is the sum of square of principal curvatures, and $f$ is a compactly supported smooth function representing the normal variation. The Morse index of a compact subset $K\cap \Sigma$ is defined to be the number of negative eigenfunctions of $J$ with Dirichlet boundary condition. By the domain monotonicity of eigenvalues, when $K_1\subset K_2$, $\ind(K_1\cap \Sigma)\le \ind(K_2\cap \Sigma)$. Hence we may define the Morse index of $\Sigma$ to be $\lim_{R\rightarrow \infty}\ind(B_R(0)\cap \Sigma)$. This limit exists and may be infinity.

The classical Bernstein theorem \cite{bernstein1927geometrisches} asserts that an entire solution to the minimal surface equation in $\RR^2$ must be affine. Later, it was proved by Fischer-Colbrie-Schoen \cite{fischer1980structure}, do Carmo-Peng \cite{do1979stable} and Pogorelov \cite{pogorelov81stability} that the plane is the only stable (index $0$) minimal surface in $\RR^3$. If we allow positive Morse index, there are lots of examples of complete immersed minimal surfaces in $\RR^3$. In \cite{costa82example}, \cite{hoffman1985embedded} and \cite{hoffman1990embedded}, the authors constructed embedded minimal surfaces of genus $g$ with any $g\ge 1$. The index of a genus $g$ Costa-Hoffman-Meeks surfaces is $2g+3$, by \cite{natayani1992morse} and \cite{morabito2009index}. Another example of an immersed minimal surface with finite topology is the Jorge-Meeks surface \cite{jorge1983topology}: for any integer $r\ge 3$, there is an immersed simply connected minimal surface with $r$ catenoidal ends. The index of a Jorge-Meeks surface with $r$ ends is $2r-3$ \cite{montiel1991schrodinger}. These examples indicate a good control of the topology of a minimal surface in $\RR^3$ by its Morse index.

The relationship between the topology and Morse index of a minimal surface has been studied by many authors. From the work of Fischer-Colbrie \cite{fischer1985complete}, we know that if minimal hypersurface in a $3$ dimensional manifold has finite Morse index, then outside a compact part the surface is stable. \cite{choe1990index}, \cite{ros2006one} and \cite{otisdavi2014index} prove that the index of a minimal surface in $\RR^3$ is bounded from below by a linear function of the number of ends and the genus. \cite{otisdavi2014index} also summarized various known results connecting the index and topology of minimal surfaces in $\RR^3$ with finite total curvature.

Similar study for minimal hypersurfaces in higher dimension has been limited, due to the lack of concrete examples and the inavailability of complex analytic tools. Examples of minimal hypersurfaces have been constructed in, for instances, \cite{Choe_onthe} and \cite{Choe_schwarz}, but none of which is complete and has finite total curvature. To our best knowledge, the only examples of complete minimal hypersurfaces with finite total curvature are the plane and the catenoid. In \cite{cao1997structure}, Cao, Shen and Zhu proved that for all $n\ge 4$, complete two-sided stable minimal hypersurfaces have at most one end. Later Shen and Zhu \cite{shen1998stable} proved that any complete stable minimal hypersurface in $\RR^n$ with finite total curvature must be a plane. For minimal hypersurfaces with positive Morse index, Tam and Zhou \cite{tam2009stability} showed that the high dimensional catenoid has index $1$. R. Schoen proved in \cite{schoen1983} that the catenoid is the only connected minimal hypersurface with two regular ends. Li and Wang \cite{li2002minimal} proved that finite index implies finitely many of ends. However, their result did not give an explicit control of the number of ends by the index of the minimal hypersurface.

It was pointed out by \cite{shoen1976harmonic} that the existence of an $L^2$ harmonic $1$-form violates stability. This was utilized by Cao-Shen-Zhu in \cite{cao1997structure} and by Li-Wang in \cite{li2002minimal}. Later Mei and Xu in \cite{mei2001} pointed out that if the minimal hypersurface has $k$ ends, then there exists a $k-1$ dimensional space of $L^2$ harmonic $1$-forms. \cite{tanno19962} also investigated the connection between $L^2$ harmonic $2$-forms and the stability in low dimensions.

In this paper, we combine an idea of Savo \cite{savo2010index} with the harmonic $1$-form technique discussed above to get an effective estimate of certain topological invariants and the index of minimal hypersurfaces in $\RR^n$. In fact, we can prove:

\begin{theo}\label{rough}
Let $\Sigma^{n-1}$ be a complete connected two-sided minimal hypersurface in $\RR^n$, $n\ge 4$. Suppose that $\Sigma$ has finite total curvature, that is, $\int_\Sigma |A|^{n-1}$ is finite. Then we have
\[\ind(\Sigma)+\nul(\Sigma) \ge \frac{2}{n(n-1)}(\#\textrm{ends}+b_1(\bar{\Sigma})-1),\]
where $\nul(\Sigma)$ is the dimension of the space of $L^2$ solutions of the Jacobi operator, and $b_1(\bar{\Sigma})$ is the first Betti number of the compactification of $\Sigma$.
\end{theo}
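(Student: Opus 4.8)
The plan is to combine the harmonic $1$-form technique with Savo's method of testing the Jacobi operator against vector-valued functions. First I would establish the topological input: since $\Sigma$ has finite total curvature, by the work of Li--Wang it has finitely many ends and a well-defined compactification $\bar\Sigma$. The space of $L^2$ harmonic $1$-forms on $\Sigma$ has dimension at least $\#\textrm{ends}+b_1(\bar\Sigma)-1$; this is precisely the reduced $L^2$ cohomology, combining the result of Mei--Xu that each additional end contributes to harmonic forms with the contribution of the first Betti number of the compactified surface. So the task reduces to producing, from a space of $L^2$ harmonic $1$-forms of dimension $N$, a subspace of test functions on which the quadratic form $Q$ is negative definite, of dimension at least $\tfrac{2}{n(n-1)}N$, which then bounds $\ind(\Sigma)+\nul(\Sigma)$ from below.

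The heart of the argument is the following construction, following Savo. Given an $L^2$ harmonic $1$-form $\omega$, I would dualize it to a harmonic vector field $X$ on $\Sigma$ and then form the test functions $f_{ij} = \langle X, \partial_i \rangle \nu_j - \langle X, \partial_j\rangle \nu_i$ (or an analogous pairing of $X$ against a constant ambient two-plane and the unit normal $\nu$), where $\partial_i$ are coordinate fields of $\RR^n$ and $\nu$ is the unit normal of $\Sigma$. These scalar functions are the natural candidates: they are built from the harmonic form and the Gauss map, and there are $\binom{n}{2}=\tfrac{n(n-1)}{2}$ of them for each antisymmetric pair. The key computation is to evaluate $Q(f_{ij}, f_{ij})$ and show that summing over all index pairs $i<j$ yields $\sum_{i<j} Q(f_{ij},f_{ij}) \le 0$, using the harmonicity of $\omega$ (hence the Bochner formula, which on a minimal hypersurface relates $\Delta X$ to $|A|^2$ terms) together with the minimal surface equation and the structure of the second fundamental form. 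Because the sum over $\tfrac{n(n-1)}{2}$ test functions is nonpositive while each harmonic form is nontrivial, averaging forces a nontrivial negative (or null) direction, and the combinatorial factor $\tfrac{n(n-1)}{2}$ is exactly what produces the coefficient $\tfrac{2}{n(n-1)}$ in the final bound.

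To convert this pointwise/integral inequality into an index estimate I would set up the standard linear-algebra averaging argument: if $\omega^{(1)},\dots,\omega^{(N)}$ span the $L^2$ harmonic $1$-forms, the associated test functions live in a space of dimension at most $\tfrac{n(n-1)}{2}N$, but the negativity of the total quadratic form guarantees that $Q$ is nonpositive on a subspace of dimension at least $N$ after accounting for the $\binom{n}{2}$-fold redundancy, whence $\ind(\Sigma)+\nul(\Sigma)\ge \tfrac{2}{n(n-1)}N \ge \tfrac{2}{n(n-1)}(\#\textrm{ends}+b_1(\bar\Sigma)-1)$. Two technical points must be handled with care: first, the test functions $f_{ij}$ must be verified to be genuinely in $L^2$ (and in the right Sobolev space for the quadratic form), which follows from the $L^2$ bound on $\omega$ together with $|\nu|=1$ and the finite-total-curvature decay of $|A|$ at the ends; second, one must ensure the map from harmonic forms to test functions does not have too large a kernel, so that the dimension count survives.

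I expect the main obstacle to be the central Bochner-type computation showing $\sum_{i<j} Q(f_{ij},f_{ij})\le 0$. This requires carefully expanding $|\nabla f_{ij}|^2$ and $|A|^2 f_{ij}^2$, integrating by parts (justified by the $L^2$ decay at the ends), and invoking the harmonicity of $\omega$ via the Weitzenb\"ock formula on $\Sigma$ to cancel the gradient terms against the curvature terms. The interplay between the ambient flatness of $\RR^n$ (which makes the Gauss map components $\nu_i$ satisfy a nice equation, namely $\Delta \nu_i = -|A|^2\nu_i$) and the intrinsic harmonicity of $X$ is what makes the cancellation work, but tracking all the cross terms and the contraction of $A$ against $\nabla X$ is where the real work lies.
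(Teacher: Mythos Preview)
Your overall strategy is the same as the paper's: build $L^2$ harmonic $1$-forms from the ends and $b_1(\bar\Sigma)$, feed them into Savo's test functions $f_{\omega,ij}=\langle\bar V_i,\nu\rangle\langle V_j,\omega\rangle-\langle\bar V_j,\nu\rangle\langle V_i,\omega\rangle$, and exploit the identity governing $\sum_{i<j}Q(f_{\omega,ij},f_{\omega,ij})$. You also correctly flag the two technical checks (that $f_{\omega,ij}\in W^{1,2}$ and that $\omega\mapsto(f_{\omega,ij})$ is injective), both of which the paper proves.

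However, your third paragraph contains a genuine gap. Knowing $\sum_{i<j}Q(f_{\omega,ij},f_{\omega,ij})\le 0$ for every $\omega$ does \emph{not} produce a subspace of the test-function space on which $Q\le 0$; a sum of quadratic values being nonpositive says nothing about any individual direction, so the ``averaging'' you describe does not yield the dimension count. The paper fixes this with an extra step you omit: use Fischer--Colbrie to take $L^2$ eigenfunctions $\varphi_1,\dots,\varphi_I$ for the negative eigenvalues, and impose the $I\cdot\binom{n}{2}$ linear conditions $\int_\Sigma \varphi_k f_{\omega,ij}=0$ on $\omega$. On the remaining $(\ge l-I\binom{n}{2})$-dimensional space of forms, each $Q(f_{\omega,ij},f_{\omega,ij})\ge 0$ by stability modulo the $\varphi_k$'s; since the paper's computation actually gives $\sum_{i<j}Q(f_{\omega,ij},f_{\omega,ij})=0$ (not merely $\le 0$), every individual term vanishes, and a rigidity statement (their Proposition~2.5) then forces each $f_{\omega,ij}$ to be a genuine $L^2$ Jacobi field. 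Injectivity plus a pigeonhole over the $\binom{n}{2}$ coordinate projections gives $\nul(\Sigma)\ge\frac{2}{n(n-1)}l-I$, which rearranges to the claim. Without this orthogonalization-then-rigidity mechanism, your passage from the summed inequality to the index bound does not go through.
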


By \cite{tysk1989finiteness}, when $4\le n\le 7$ and $\Sigma^{n-1}\subset \RR^n$ has Euclidean volume growth (that is, $\lim_{R\rightarrow \infty}\vol(\Sigma\cap B_R(0))/R^{n-1}<\infty$), then $\Sigma$ has finite total curvature if and only if it has finite index. Therefore

\begin{coro}
Let $\Sigma^{n-1}$ be a complete connected two-sided minimal hypersurface in $\RR^n$, $4\le n\le 7$. Suppose that $\Sigma$ has Euclidean volume growth. Then
\[\ind(\Sigma)+\nul(\Sigma) \ge \frac{2}{n(n-1)}(\#\textrm{ends}+b_1(\bar{\Sigma})-1).\]
\end{coro}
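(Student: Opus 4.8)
The plan is to deduce the Corollary directly from Theorem \ref{rough}, using the equivalence between finite total curvature and finite index that was recorded just before the statement. The only point requiring attention is that Theorem \ref{rough} takes \emph{finite total curvature} as a hypothesis, whereas here I am given only \emph{Euclidean volume growth}. I would bridge this gap with Tysk's finiteness theorem \cite{tysk1989finiteness}, which under the standing assumptions $4\le n\le 7$ and Euclidean volume growth asserts that $\int_\Sigma|A|^{n-1}<\infty$ if and only if $\ind(\Sigma)<\infty$.

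First I would split into two cases according to whether $\ind(\Sigma)$ is finite. If $\ind(\Sigma)=+\infty$, then the left-hand side of the asserted inequality is infinite, so the inequality holds trivially and there is nothing to prove. In the remaining case $\ind(\Sigma)<\infty$, I would apply Tysk's result in the direction ``finite index $\Rightarrow$ finite total curvature'': since $4\le n\le 7$ and $\Sigma$ has Euclidean volume growth, the assumption $\ind(\Sigma)<\infty$ forces $\int_\Sigma|A|^{n-1}<\infty$. At this point every hypothesis of Theorem \ref{rough} is met—$\Sigma$ is complete, connected, two-sided, and of finite total curvature—so Theorem \ref{rough} applies verbatim and gives
\[\ind(\Sigma)+\nul(\Sigma)\ge \frac{2}{n(n-1)}(\#\textrm{ends}+b_1(\bar{\Sigma})-1),\]
which is exactly the desired conclusion. (In this case the right-hand side is automatically finite: finite index already yields finitely many ends by Li--Wang \cite{li2002minimal}, and finite total curvature controls $b_1(\bar\Sigma)$.)

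Since all of the analytic content is already contained in Theorem \ref{rough} and in Tysk's finiteness theorem, I do not expect a substantive obstacle here. The only care required is to invoke the equivalence in the correct logical direction—deriving finite total curvature \emph{from} finite index, rather than the reverse—and to dispose of the infinite-index case separately so that the statement is meaningful without an a priori finiteness assumption.
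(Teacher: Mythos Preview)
Your proposal is correct and follows exactly the paper's approach: the paper derives the Corollary immediately from Theorem \ref{rough} via Tysk's equivalence between finite index and finite total curvature under Euclidean volume growth for $4\le n\le 7$. Your explicit case split (disposing of $\ind(\Sigma)=+\infty$ trivially, then invoking Tysk in the direction finite index $\Rightarrow$ finite total curvature) is slightly more careful than the paper's one-word ``Therefore,'' but the argument is the same.
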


Through a careful study of the rigidity case, we are able to get rid of the nullity term with the extra assumption that the dimension $n=4$, or that there exists one point on $\Sigma$ where the principal curvatures are all distinct. Namely, we have
\begin{theo}\label{precise}
Let $\Sigma^{n-1} \subset \RR^{n}$ be a complete connected two-sided minimal hypersurface with finite total curvature. Suppose that $n=4$, or that there exists a point on $\Sigma$ where all the principal curvatures are distinct. Then
\[\ind(M)\ge \frac{2}{n(n-1)} (\# \textrm{ends}+b_1(\bar{\Sigma}))-\frac{4}{n}.\]
\end{theo}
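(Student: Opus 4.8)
The plan is to re-open the proof of Theorem \ref{rough} rather than to feed an a priori bound on $\nul(\Sigma)$ into it. Writing $k=\#\textrm{ends}$ and $b=b_1(\bar\Sigma)$, a direct substitution would succeed only if $\nul(\Sigma)\le \frac{4n-6}{n(n-1)}$, and since $\frac{4n-6}{n(n-1)}<1$ for $n\ge 4$ this would force $\nul(\Sigma)=0$, which already fails for the catenoid. Observe instead that, with $m=\frac{n(n-1)}{2}$, the target inequality reads
\[\ind(\Sigma)\ \ge\ \frac{1}{m}\big((k+b)-2(n-1)\big),\]
whereas Theorem \ref{rough} gives $\ind(\Sigma)+\nul(\Sigma)\ge \frac1m\,(k+b-1)$. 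So it suffices to produce a subspace $\cW'$ of the space $\cW$ of $L^2$ harmonic $1$-forms used in proving Theorem \ref{rough}, of codimension at most $2n-3$, on which the associated test functions span a space of \emph{strictly negative} directions for the second variation (no null directions). Savo's linear-algebra lemma applied to $\cW'$ then yields $\ind(\Sigma)\ge \frac1m\dim\cW'\ge \frac1m\big(\dim\cW-(2n-3)\big)$, which is exactly the claim once $\dim\cW\ge k+b-1$ as in Theorem \ref{rough}.

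The heart of the matter is therefore the equality (rigidity) analysis. Recall that each test function is obtained by pairing a harmonic $1$-form $\omega\in\cW$ with the restriction to $\Sigma$ of a parallel $2$-form of $\RR^n$, and that non-positivity of the summed second variation comes from a Bochner identity together with the Gauss equation $\Ric_\Sigma=-A^2$ for a minimal hypersurface. A given test function lies in the nullspace of the Jacobi operator only if every Cauchy--Schwarz and integration-by-parts step in that computation is an equality. Tracing these equalities back, I would show that they force $\nabla\omega$ to be algebraically aligned with the shape operator $A$; equivalently, the corresponding $L^2$ Jacobi field must be the normal part $\langle V,\nu\rangle$ of an ambient Killing field $V$ of $\RR^n$.

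Next I would use the curvature hypothesis to count. At a point $p\in\Sigma$ where all principal curvatures are distinct, the shape operator $A_p$ has simple spectrum, so the alignment condition from the previous step confines the offending forms $\omega$ to a fixed subspace; dually, the $L^2$ Jacobi fields arising from ambient Killing fields and compatible with minimality, two-sidedness, and the finite-total-curvature ($L^2$) condition span a space of dimension at most $2n-3$ (for $n=3$ this count is exactly the nullity $3$ of the catenoid, which is a useful consistency check). This is the subspace we quotient out to obtain $\cW'$, and it produces precisely the $-2(n-1)$ in the numerator, i.e. the $-\frac4n$ term.

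Finally, the dimension-four clause is handled by classification in the degenerate case. If $n=4$ and no point of $\Sigma$ has all three principal curvatures distinct, then at every point at least two coincide; since $\kappa_1+\kappa_2+\kappa_3=0$, the principal curvatures are everywhere of the form $(\kappa,\kappa,-2\kappa)$. Together with finite total curvature this should be rigid enough to force $\Sigma$ to be rotationally symmetric, hence the plane or the catenoid, for which $k$, $b$, and $\ind$ are known and the inequality is verified directly. I expect the main obstacle to be the rigidity lemma of the second and third paragraphs: converting the analytic equality case of the Bochner/second-variation estimate into the concrete statement that the null test-function directions all come from ambient isometries, and pinning the resulting count down to exactly $2n-3$.
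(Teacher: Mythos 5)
Your reduction of the target inequality to $\ind(\Sigma)\ge \frac{2}{n(n-1)}\bigl(\dim\cW-(2n-3)\bigr)$, the count $2n-3$ for the exceptional forms, and the $n=4$ dichotomy (either some point has three distinct principal curvatures, or they are everywhere of the form $(\kappa,\kappa,-2\kappa)$ and rigidity forces a hyperplane or catenoid, which is exactly Theorem \ref{rigidity}) all match the paper. But the mechanism by which you extract the index bound is not viable. You propose to find $\cW'\subset\cW$ of codimension at most $2n-3$ whose test functions $f_{\omega,ij}$ span a subspace on which $Q$ is negative definite, and then invoke Proposition \ref{linearalgebra}. In flat $\RR^n$ this is impossible: the central computation in the proof of Theorem \ref{rough} shows that for \emph{every} $\omega\in\cW$ (not just special ones),
\[\sum_{1\le i<j\le n}Q(f_{\omega,ij},f_{\omega,ij})=0,\]
because the ambient curvature term that makes this sum strictly negative in Savo's spherical setting vanishes in Euclidean space. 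Consequently, for every nonzero $\omega$ at least one nonvanishing $f_{\omega,ij}$ satisfies $Q(f_{\omega,ij},f_{\omega,ij})\ge 0$, and since that function lies in the span you are considering, no such span is ever negative definite. Proposition \ref{linearalgebra} supplies only linear independence; it cannot manufacture negative directions, and indeed orthogonality to the eigenfunctions of Proposition \ref{generalizedfischer} yields $Q\ge 0$ on test functions --- the opposite sign from what your plan requires.

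The index must therefore enter through constraint counting, which is how the paper argues: orthogonality of all the $f_{\omega,ij}$ to the $I=\ind(\Sigma)$ eigenfunctions imposes $I\cdot\frac{n(n-1)}{2}$ linear conditions on $\omega\in\cW$; any $\omega$ in the kernel of this linear system has $Q(f_{\omega,ij},f_{\omega,ij})\ge 0$ for all pairs, hence by the zero-sum identity $Q(f_{\omega,ij},f_{\omega,ij})=0$, hence each $f_{\omega,ij}$ is an $L^2$ Jacobi field by Proposition \ref{generalizedfischer}. It is this kernel --- not a complement of it --- that the rigidity analysis bounds by $2n-3$: locally $\omega=d\phi$ with $\phi$ harmonic and $\Hess\phi(S(X),Y)=\Hess\phi(S(Y),X)$, and at a point where the principal curvatures are simple, analyticity and unique continuation show $\phi$ is determined by $2n-2$ of its derivatives at that point. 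Note that this dimension bound is all that is required; your stronger plan of identifying the null directions with normal components of ambient Killing fields is neither proved in the paper nor needed, so even after repairing the sign issue, that identification would remain an open step in your argument.
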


The assumptions of Euclidean volume growth or finite total curvature in the previous two theorems are natural. In \cite{tysk1989finiteness}, Tysk proved that all minimal hypersurfaces with finite total curvature must be regular at infinity (see \cite{schoen1983} for the definition of regular at infinity). That is, at each end the surface is a graph over some plane of a function decaying like $C|x|^{-n+2}$. This precise large scale behavior of each end enables us to perform a more precise analysis.

Our theorem has some interesting applications in the study of minimal hypersurfaces in Euclidean space. For example, complete minimal hypersurfaces of index one is one of the most natural objects occured in geometric variational problems. In \cite{ros1989}, L\'opez and Ros proved that the catenoid and the Enneper surface are the only index one complete connected two-sided immersed minimal surface in $\RR^3$. It is unknown if the catenoid is the only index $1$ complete embedded minimal hypersurface in the Euclidean space. By \cite{schoen1983}, we know that if a minimal hypersurface has two regular ends, then it is a catenoid. Theorem \ref{precise} is not strong enough to conclude this. However, we do have the following properties of the space of index $1$ minimal hypersurfaces in $\RR^4$.

\begin{theo}\label{compactnessofindex1}
The space of complete connected embedded two-sided index $1$ minimal hypersurfaces $\Sigma^3\subset\RR^4$ with Euclidean volume growth, normalized such that $|A_\Sigma|(0)=\max |A|_\Sigma=1$, is compact in the smooth topology.
\end{theo}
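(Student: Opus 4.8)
The plan is to take an arbitrary sequence $\{\Sigma_i\}$ in this space, extract a subsequence converging smoothly to a hypersurface $\Sigma_\infty$, and verify that $\Sigma_\infty$ again belongs to the space; compactness follows. The driving input is Theorem \ref{precise}: specializing to $n=4$ and $\ind(\Sigma_i)=1$ gives $\frac{1}{6}\bigl(\#\textrm{ends}(\Sigma_i)+b_1(\bar{\Sigma}_i)\bigr)-1\le 1$, so $\#\textrm{ends}(\Sigma_i)+b_1(\bar{\Sigma}_i)\le 12$ for every $i$. Thus the whole class has uniformly bounded topology. This supplies the two ingredients for a bounded-geometry compactness theorem: the curvature bound $|A_{\Sigma_i}|\le 1$ from the normalization, and a uniform area bound. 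For the latter I would invoke the monotonicity formula together with Tysk's \cite{tysk1989finiteness} result that finite total curvature (which holds here since Euclidean volume growth and finite index are equivalent to it for $4\le n\le 7$) forces each end to be regular at infinity: the density ratio $\vol(\Sigma_i\cap B_R(0))/R^{3}$ is then bounded by the density at infinity, which for an embedded surface equals its number of ends, hence by $12$, uniformly in $i$ and $R$.

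Next I would produce the limit. Since $|A_{\Sigma_i}|\le 1$, interior estimates for minimal hypersurfaces give uniform local graphical representations with uniform $C^k$ bounds, so after passing to a subsequence $\Sigma_i\to\Sigma_\infty$ in $C^\infty_{\mathrm{loc}}$, where $\Sigma_\infty$ is a smooth embedded two-sided minimal hypersurface arising with some integer multiplicity $m$. Because every $\Sigma_i$ passes through the origin with $|A_{\Sigma_i}|(0)=1$, smooth convergence forces $0\in\Sigma_\infty$ and $|A_{\Sigma_\infty}|(0)=1$; in particular $\Sigma_\infty$ is not a plane, and the uniform curvature bound makes $\max|A_{\Sigma_\infty}|=1$ attained at $0$, so the limit is correctly normalized. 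The uniform density bound passes to the limit, so $\Sigma_\infty$ has Euclidean volume growth.

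The step I expect to be the main obstacle is showing that no ends or topology escape, i.e.\ that $\Sigma_\infty$ is complete and properly embedded with the same number of ends as the $\Sigma_i$, rather than an open piece of a larger object or a surface with fewer ends. Here I would again use the uniform topological bound together with Tysk's regularity at infinity, in the spirit of Chodosh--Ketover--Maximo \cite{chodosh2015minimal}: finite total curvature makes each end a graph over a plane decaying like $C|x|^{-2}$, and a uniform total curvature bound (which, as in the $\RR^3$ theory of \cite{chodosh2015minimal}, follows from the finite-index hypothesis and the uniform bound on $\#\textrm{ends}+b_1$) should yield a single radius $R_0$, independent of $i$, outside of which each $\Sigma_i$ is a disjoint union of at most $12$ such graphical ends with uniformly controlled geometry. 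Matching these graphical ends across the limit, so that the count is preserved and no neck thins out at infinity, is the delicate point; once it is in place, all topology is captured inside $B_{R_0}(0)$ where convergence is smooth, and $\Sigma_\infty$ is complete, properly embedded, of finite total curvature, with the same number of ends.

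Finally, granting the completeness just established, I would pin down the multiplicity and index. If the convergence had multiplicity $m\ge 2$, then over a domain in $\Sigma_\infty$ the top two sheets of $\Sigma_i$ give a positive normal graph difference whose normalization converges to a nonzero, nonnegative solution of the Jacobi equation $\Delta w+|A|^2w=0$ on $\Sigma_\infty$; the strong maximum principle upgrades this to a positive Jacobi field, forcing $\Sigma_\infty$ to be stable and hence, by Shen--Zhu \cite{shen1998stable}, a plane, contradicting $|A_{\Sigma_\infty}|(0)=1$. Therefore $m=1$. Multiplicity-one convergence yields lower semicontinuity of the index, $\ind(\Sigma_\infty)\le\liminf_i\ind(\Sigma_i)=1$, while nonflatness together with \cite{shen1998stable} excludes index $0$; hence $\ind(\Sigma_\infty)=1$. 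The same stability dichotomy forces $\Sigma_\infty$ to be connected, since any extra component would be stable, hence a plane, which cannot appear as a separate multiplicity-one sheet without violating the uniform density bound. Thus $\Sigma_\infty$ is a complete connected embedded two-sided index $1$ minimal hypersurface in $\RR^4$ with Euclidean volume growth and $\max|A_{\Sigma_\infty}|=|A_{\Sigma_\infty}|(0)=1$, completing the proof.
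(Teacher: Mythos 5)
Your overall skeleton matches the paper's proof: uniform density bound from Theorem \ref{precise}, the two modes of convergence (locally smooth from $|A|\le 1$, varifold from the mass bound), index $\le 1$ from convergence of the second variation, and non-stability of the limit from $|A_{\Sigma_\infty}|(0)=1$ together with \cite{shen1998stable}. But the step you flag as ``the delicate point'' --- producing a single radius $R_0$, independent of $i$, outside which every $\Sigma_i$ is a union of graphical ends, so that no neck drifts to infinity --- is precisely the main content of the paper's proof, and the route you propose for it fails. A uniform bound on the total curvature $\int_{\Sigma_i}|A|^{3}$ does \emph{not} yield such an $R_0$: total curvature is scale invariant, so it can concentrate in necks located at scales going to infinity with $i$. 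The paper's own Example \ref{costa} (normalized Costa--Hoffman--Meeks deformations) exhibits exactly this: a family with uniformly bounded index and total curvature for which, for every $R$, some member fails to be graphical outside $B_R(0)$. Hence ruling out drifting necks must use the hypothesis that the index is exactly $1$, which your argument never invokes at this step. The paper does this by taking $R_j$ minimal so that $|A_{\Sigma_j}|\cdot|x|\le \frac{1}{4}$ outside $B_{R_j}(0)$ and claiming $\{R_j\}$ is bounded: if $R_j\to\infty$, the blow-downs $\bar\Sigma_j=\frac{1}{R_j}\Sigma_j$ have $|A_{\bar\Sigma_j}|(0)=R_j\to\infty$, so convergence is not smooth at $0$; index $1$ forces at most one bad point, Allard's theorem forces multiplicity $\ge 2$ there, and the Jacobi-field argument of \cite{bensharp2015compact} (extended across $0$ using the uniform total curvature bound) makes the limit stable, hence a plane --- contradicting that minimality of $R_j$ puts points $x_j\in\partial B_1(0)$ with $|A_{\bar\Sigma_j}|(x_j)=\frac{1}{4}$, where convergence is smooth. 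That rescaling contradiction is the missing idea in your proposal.

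A secondary problem: your fallback argument for connectedness does not close. You argue an extra component of $\Sigma_\infty$ would be stable, hence a plane, and that a plane ``cannot appear without violating the uniform density bound''; but the density bound (the paper uses $\eta=15$) easily accommodates one nonflat component plus a unit-density plane, so nothing is violated. What actually excludes disconnected limits is again the uniform radius: once $\{R_j\}$ is bounded by some $R$, each $\Sigma_j\cap B_R(0)$ is connected (all topology lies inside $B_{R_j}$, by Proposition \ref{brian}), Hausdorff convergence on compact sets makes $\Sigma\cap B_R(0)$ connected, and Proposition \ref{brian} applied to the limit finishes. So both completeness/properness of the limit and its connectedness hinge on the one claim your proposal leaves unproved.
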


\begin{theo}
There exists a constant $R_0$ such that the following holds: for any complete connected embedded two-sided minimal hypersurface $\Sigma\subset \RR^4$ with finite total curvature and index $1$, normalized so that $|A_\Sigma|(0)=\max|A_\Sigma|=1$, $\Sigma$ is a union of minimal graphs in $\RR^4-B_R(0)$.
\end{theo}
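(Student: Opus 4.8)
The plan is to argue by contradiction and to reduce the graphical conclusion to a uniform scale-invariant curvature decay estimate for the whole family. Write $\mathcal F$ for the family of surfaces in the statement; every $\Sigma\in\mathcal F$ satisfies $|A_\Sigma|\le 1$, and by \cite{tysk1989finiteness} each individual $\Sigma$ is regular at infinity, hence a union of minimal graphs outside some (non-uniform) ball. A standard consequence of $\varepsilon$-regularity for embedded minimal hypersurfaces is that there is a fixed $\delta_0>0$ such that, if $|x|\,|A_\Sigma|(x)<\delta_0$ for all $x\in\Sigma$ with $|x|\ge R$, then $\Sigma\cap(\RR^4\setminus B_{2R}(0))$ is a union of minimal graphs over a hyperplane. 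Thus it suffices to prove the uniform decay $\lim_{R\to\infty}\sup_{\Sigma\in\mathcal F}\sup_{x\in\Sigma,\,|x|\ge R}|x|\,|A_\Sigma|(x)=0$, from which $R_0$ is read off. If this fails, we obtain $\delta_0>0$, surfaces $\Sigma_i\in\mathcal F$, and points $x_i\in\Sigma_i$ with $r_i:=|x_i|\to\infty$ and $r_i\,|A_{\Sigma_i}|(x_i)\ge\delta_0$.

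Next I would blow down. Set $\Gamma_i:=\tfrac1{r_i}\Sigma_i$ and $q_i:=x_i/r_i$, so that $\Gamma_i$ is again complete, embedded, two-sided, minimal, of index $1$, with total curvature, number of ends, index and nullity all unchanged (each is scale invariant), and with $|q_i|=1$, $|A_{\Gamma_i}|(q_i)\ge\delta_0$. By Theorem~\ref{compactnessofindex1} a subsequence of the normalized $\Sigma_i$ converges smoothly on compact sets to some $\Sigma_\infty\in\mathcal F$; since $|A_{\Sigma_i}|(0)=1$, smooth convergence forces $|A_{\Sigma_\infty}|(0)=1$, so $\Sigma_\infty$ is \emph{not} a plane. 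By \cite{shen1998stable} a complete stable minimal hypersurface with finite total curvature is a plane, hence $\Sigma_\infty$ is unstable, of index exactly $1$; consequently no index is lost at infinity and the unstable regions of the $\Sigma_i$ remain in a fixed ball $B_\Lambda(0)$. Therefore $\Gamma_i$ is stable outside $B_{\Lambda/r_i}(0)$, so for each $\varepsilon>0$ and $i$ large it is a stable embedded minimal hypersurface on $\{|y|\ge\varepsilon\}$. By Theorem~\ref{precise} the number of ends is uniformly bounded, hence the density at infinity is uniformly bounded, which gives uniform area ratios; combined with stability, the Schoen--Simon curvature estimates bound $|A_{\Gamma_i}|$ uniformly on compact subsets of $\RR^4\setminus\{0\}$.

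I would then pass to a further limit $\Gamma_\infty$ in $C^\infty_{loc}(\RR^4\setminus\{0\})$: a stable minimal hypersurface of finite total curvature (by lower semicontinuity), nonflat since $q_i\to q_\infty$ with $|q_\infty|=1$ and $|A_{\Gamma_\infty}|(q_\infty)\ge\delta_0$. The origin is an isolated singularity whose tangent cone is a stable minimal cone in $\RR^4$, hence a hyperplane by Simons' theorem (this is where $n=4\le 7$ enters), so by Allard's regularity theorem the singularity is removable and $\Gamma_\infty$ extends to a complete properly embedded stable minimal hypersurface in $\RR^4$. Applying \cite{shen1998stable} to the component through $q_\infty$ shows it is a plane, contradicting $|A_{\Gamma_\infty}|(q_\infty)\ge\delta_0>0$. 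This contradiction establishes the uniform decay, and hence the theorem.

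The delicate points are, first, the passage from uniform scale-invariant curvature smallness to the honest statement that $\Sigma$ is a \emph{union of minimal graphs} beyond a fixed radius: one must control not only $|A|$ but the tilt of the tangent planes so that distinct ends do not merge, which leans on embeddedness and the regular-at-infinity structure of \cite{tysk1989finiteness}. Second, one must rule out loss of index at infinity, i.e. show the unstable regions stay in a uniform ball; this is where Theorem~\ref{compactnessofindex1} does the real work. I expect this second point, together with the removable-singularity analysis at the blow-down origin, to be the main obstacle; the remainder is the scale-invariant rescaling combined with the Bernstein theorem of \cite{shen1998stable} and the Schoen--Simon and Simons estimates that are available precisely because $n=4$.
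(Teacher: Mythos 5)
Your proposal is sound in overall strategy, but it takes a genuinely different route from the paper. In the paper, this theorem is not deduced from Theorem \ref{compactnessofindex1}; rather, both results flow from the Claim inside that theorem's proof, namely that the least radius $R_j$ beyond which $|A_{\Sigma_j}|\cdot|x|\le\frac{1}{4}$ (with transverse spherical intersection) is uniformly bounded over the family. The paper proves that Claim by blowing down at scale $R_j$, taking an Allard (varifold) limit, obtaining connectedness from White's Proposition \ref{brian}, observing that index $1$ permits at most one point of non-smooth convergence (necessarily the origin, where the curvature blows up), invoking Allard regularity to force multiplicity $\ge 2$ there, and then running the argument of \cite{bensharp2015compact}: the distance between sheets produces a positive Jacobi field, which extends across the puncture thanks to the uniform total-curvature bound from \cite{tysk1989finiteness}; hence the blow-down limit is stable, so a plane, contradicting the existence of a point on $\partial B_1$ with $|A|=\frac{1}{4}$ coming from the minimality of $R_j$. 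You instead treat Theorem \ref{compactnessofindex1} as a black box at unit scale to localize the index: smooth subconvergence to a limit that is unstable (by \cite{shen1998stable}, since $|A|(0)=1$) lets you transplant a fixed negative direction into a fixed ball, so index one forces $\Sigma_i\setminus B_\Lambda(0)$ to be stable; after rescaling by $|x_i|$ the surfaces are stable outside shrinking balls, and Schoen--Simon estimates, a removable-singularity step, and a Bernstein theorem produce the contradiction. Your route entirely avoids the multiplicity dichotomy and the Jacobi-field construction (stability of the blow-down limit comes for free from index localization), at the cost of invoking the compactness theorem and the Schoen--Simon stable theory; the paper's route is self-contained inside the compactness proof and needs only Allard plus Sharp's trick.

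Three points need repair, the first being the most substantive. Your opening claim --- that $|x|\,|A_\Sigma|(x)<\delta_0$ outside $B_R$ implies, by $\varepsilon$-regularity alone, that $\Sigma\setminus B_{2R}$ is a union of graphs over hyperplanes --- is false as stated. Scale-invariant smallness only makes each end graphical at scale comparable to $|x|$, over planes that may rotate by $\sim\delta_0$ per dyadic scale, and this tilt can accumulate logarithmically over the unboundedly many scales; to pin down a single hyperplane per end one needs the uniqueness of the tangent plane at infinity and the decay improvement coming from the minimal surface equation (the regular-at-infinity asymptotics of \cite{schoen1983} and \cite{tysk1989finiteness}, cf.\ Steps 3--4 of the appendix), or a further compactness argument. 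You flag this as delicate, and the paper's own corollary leaves the same passage implicit, but it is exactly where the graphical conclusion lives, so it must be carried out rather than attributed to $\varepsilon$-regularity. Second, Allard's theorem removes the singularity at the blow-down puncture only when the density there is close to one; if the extension across the origin has multiplicity $\ge 2$, you should instead quote Schoen--Simon regularity for stable hypersurfaces (an isolated point has vanishing $\mathcal{H}^{1}$ measure and the hypersurface dimension $3\le 6$), or note that a point has zero capacity, so stability extends across the puncture and the singular set is empty. Third, ``finite total curvature by lower semicontinuity'' requires a uniform total-curvature bound along the sequence --- finiteness for each $\Gamma_i$ separately does not pass to the limit; either quote the uniform bound from \cite{tysk1989finiteness} (as the paper does in the proof of Theorem \ref{compactnessofindex1}), or bypass total curvature altogether by replacing \cite{shen1998stable} with the stable Bernstein theorem under Euclidean volume growth, which you do have uniformly from Theorem \ref{precise} and the monotonicity formula.
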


Such property is not expected for a family of minimal hypersurfaces with larger index bound, as illustrated by the following example.

\begin{exam}\label{costa}
Let $C_0$ be the genus $2$ Costa-Hoffman-Meeks surface with $3$ ends, one planar end and two catenoidal ends behaving like $\log |x|, -\log |x|$ near infinity. It is known that there is a family of deformed surfaces $C_t$ with three catenoidal ends whose growth rate near infinity is approximately $a_t\log |x|, \log |x|, b_t\log |x|$, with $a_t>0>b_t, a_t+b_t+1=0$. The surface $C_t$ qualitatively looks like three surfaces, each with one catenoidal end, joined by three catenoidal necks. The curvature of the surface $C_t$ is maximized at the three catenoidal necks. Now if we normalize each $C_t$ to $C'_t$, with $|A_{C'_t}|$ maximized at $|A_{C'_t}|(0)=1$ where $0$ is on one of the three necks, then other necks of $C'_t$ drifts to infinity as $t$ goes to infinity. In particular, for any $R>0$, there is $C'_t$ which is not graphical outside $B_R(0)$. However, the family $C'_t$ have uniformly bounded index.
\end{exam}

The second application is the finiteness of diffeomorphism types of minimal hypersurfaces in $\RR^4$ with Euclidean volume growth and bounded index. Using theorem \ref{precise} and ideas from the recent work of Chodosh-Ketover-Maximo \cite{chodosh2015minimal}, we are able to get the following:

\begin{theo}
There exists $N=N(I)$ such that there are at most $N$ mutually non-diffeomorphic complete embedded minimal hypersurfaces $\Sigma^3$ in $\RR^4$ with Euclidean volume growth and $\ind(\Sigma)\le I$.
\end{theo}

It would be interesting to see in more generality how the index of a minimal hypersurface in $\RR^n$ depends on its topological invariants. It is conjectured that a similar statement as in Theorem \ref{precise} should hold for $4\le n \le 7$. Even in dimension $4$, we believe that the inequality of Theorem \ref{precise} is not optimal. For example, it does not answer the question of whether the higher dimensional catenoid is the only minimal hypersurface in Euclidean space of index $1$. These are interesting questions to investigate in future.

The author would like to express his most sincere gratitude to his advisors, Rick Schoen and Brian White, for bringing this question to his attention and for several enlightening discussions. He also wants to thank Robert Bryant and Jesse Madnick for their insights in the rigidity discussion, and David Hoffman for a careful description of the Costa-Hoffman-Meeks surfaces. Special thanks go to the referee for many illustrating suggestions.

\section{Spectral properties of minimal hypersurface with finite total curvature}

We start by revisiting the following classical result of Fischer-Colbrie.

\begin{theo}[\cite{fischer1985complete}]\label{fischer}
Let $\Sigma^{n-1}\subset\RR^n$ be a complete two-sided minimal surface of index $k$. Then there exist $L^2$ orthonormal eigenfunctions $f_1,\ldots,f_k$ of the Jacobi operator $J$ associated to negative eigenvalues, such that for any compactly supported smooth function $f$ on $\Sigma$ that is $L^2$ orthogonal to $f_1,\ldots,f_k$, $Q(f,f)\ge 0$.
\end{theo}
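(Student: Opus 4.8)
The plan is to exhaust $\Sigma$ by the compact domains $\Omega_j=B_{R_j}(0)\cap\Sigma$ with $R_j\to\infty$, analyze the Dirichlet spectrum of $-J=-\Delta-|A|^2$ (the operator associated to $Q$) on each $\Omega_j$, and pass to a limit. First I would record two consequences of finite index. By domain monotonicity $\ind(\Omega_j)$ is nondecreasing, so it stabilizes: there is $j_0$ with $\ind(\Omega_j)=k$ for all $j\ge j_0$. Moreover $\Sigma$ is stable outside some compact ball $B_{R_0}(0)$, i.e. $Q(g,g)\ge 0$ for every $g\in C_c^\infty(\Sigma\setminus B_{R_0}(0))$; otherwise one could place an unstable bump in each of infinitely many disjoint annuli and, since disjoint supports make $Q$ block-diagonal, force $\ind(\Sigma)=\infty$. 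On each $\Omega_j$ ($j\ge j_0$) let $\mu_1^j\le\cdots\le\mu_k^j<0$ be the negative Dirichlet eigenvalues, with $L^2$-orthonormal eigenfunctions $\phi_1^j,\dots,\phi_k^j$ extended by zero. The min-max characterization shows each $\mu_i^j$ is nonincreasing in $j$, and an IMS-type localization against a cutoff that is supported in the stable region outside $B_{R_0}(0)$ gives $Q(f,f)\ge -C\|f\|_{L^2}^2$ for all $f\in C_c^\infty(\Sigma)$, so the $\mu_i^j$ are bounded below. Hence, after passing to a subsequence, $\mu_i^j\to\mu_i\in[-C,0)$ for each $i$.

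Next I would produce the limiting eigenfunctions. On any fixed compact $K\subset\Sigma$ the $\phi_i^j$ solve $-\Delta\phi_i^j-|A|^2\phi_i^j=\mu_i^j\phi_i^j$ with uniformly bounded coefficients and uniformly bounded $L^2$ norm, so interior elliptic estimates furnish uniform local $C^{2,\alpha}$ bounds; a diagonal argument yields a subsequence with $\phi_i^j\to\phi_i$ in $C^2_{\mathrm{loc}}$, where each $\phi_i$ solves $-\Delta\phi_i-|A|^2\phi_i=\mu_i\phi_i$ on all of $\Sigma$.

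The crucial step — which I expect to be the main obstacle — is to rule out loss of $L^2$ mass at infinity, so that the $\phi_i$ stay $L^2$, orthonormal, and in particular nonzero. For this I would use the identity
\[Q(\eta\phi_i^j,\eta\phi_i^j)=\mu_i^j\int_\Sigma\eta^2(\phi_i^j)^2+\int_\Sigma|\nabla\eta|^2(\phi_i^j)^2,\]
valid for any Lipschitz cutoff $\eta$, together with stability outside $B_{R_0}(0)$. Choosing $\eta$ equal to $0$ on $B_{R_1}(0)$ and $1$ outside $B_{2R_1}(0)$ with $R_1\ge R_0$ and $|\nabla\eta|\le C/R_1$, the left side is nonnegative (its argument is compactly supported in the stable region), while $\mu_i^j\le\mu_k^{j_0}=:-\delta<0$, so
\[\delta\int_{\Sigma\setminus B_{2R_1}(0)}(\phi_i^j)^2\le -\mu_i^j\int_\Sigma\eta^2(\phi_i^j)^2\le\int_\Sigma|\nabla\eta|^2(\phi_i^j)^2\le\frac{C}{R_1^2}.\]
This tail bound is uniform in $j$ and tends to $0$ as $R_1\to\infty$, which upgrades $C^2_{\mathrm{loc}}$ convergence to convergence in $L^2(\Sigma)$. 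Consequently $\int_\Sigma\phi_i\phi_{i'}=\lim_j\int_{\Omega_j}\phi_i^j\phi_{i'}^j=\delta_{ii'}$, so $\phi_1,\dots,\phi_k$ are genuine $L^2$-orthonormal eigenfunctions with negative eigenvalues, spanning a $k$-dimensional space.

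Finally I would deduce the stability statement. Suppose $f\in C_c^\infty(\Sigma)$ is $L^2$-orthogonal to $\phi_1,\dots,\phi_k$ but $Q(f,f)=-c<0$, and fix $j$ so large that $\supp f\subset\Omega_j$. Expanding $f$ in the complete $L^2(\Omega_j)$-eigenbasis $\{\phi_\ell^j\}_{\ell\ge1}$ of $-J$ and using $\mu_\ell^j\ge0$ for $\ell>k$ gives
\[-c=Q(f,f)=\sum_{\ell\ge1}\mu_\ell^j\langle f,\phi_\ell^j\rangle^2\ge\mu_1^j\sum_{\ell=1}^k\langle f,\phi_\ell^j\rangle^2\ge -C\sum_{\ell=1}^k\langle f,\phi_\ell^j\rangle^2,\]
so $\sum_{\ell=1}^k\langle f,\phi_\ell^j\rangle^2\ge c/C$ for all such $j$. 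But $\langle f,\phi_\ell^j\rangle\to\langle f,\phi_\ell\rangle=0$ by the $L^2$ convergence established above, a contradiction. Hence $Q(f,f)\ge0$ on the $L^2$-orthogonal complement of $\Span\{\phi_1,\dots,\phi_k\}$, which is the assertion.
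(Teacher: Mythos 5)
Your proof is correct. The paper itself gives no proof of this theorem — it cites Fischer-Colbrie \cite{fischer1985complete} and remarks that her argument carries over to hypersurfaces $\Sigma^{n-1}\subset\RR^n$ — and your argument (exhaustion by $\Omega_j=B_{R_j}(0)\cap\Sigma$ with stabilized Dirichlet index, eigenvalue monotonicity plus an IMS lower bound, the cutoff identity $Q(\eta\phi_i^j,\eta\phi_i^j)=\mu_i^j\int\eta^2(\phi_i^j)^2+\int|\nabla\eta|^2(\phi_i^j)^2$ played against stability outside a compact set to prevent $L^2$ mass from escaping, and finally the spectral expansion of a test function on $\Omega_j$) is precisely that classical exhaustion argument, so it takes essentially the same route as the proof the paper relies on.
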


\begin{rema}
	In \cite{fischer1985complete}, the above theorem is stated for minimal surfaces in $3$-manifolds. However, the same proof generalizes for complete two-sided minimal hypersurfaces $\Sigma^{n-1}$ in $\RR^n$ without much difficulty.
\end{rema}

Let us now recall the definition for a minimal hypersurface to be regular at infinity.

\begin{defi}[\cite{schoen1983}]
Suppose $n\ge 4$. A minimal hypersurface $\Sigma^{n-1}\subset \RR^n$ is regular at infinity, if outside a compact set, each connected component of $\Sigma$ is the graph of a function $u$ over a hyperplane $P$, such that for $x\in P$,
\[|x|^{n-3}|u(x)|+|x|^{n-2}|\nabla u(x)|+|x|^{n-1}|\nabla^2 u(x)|\le C,\]
where $C$ is some constant.
\end{defi}

In order to perform a more careful rigidity analysis, we use the extra condition that the minimal hypersurface $\Sigma$ has finite total curvature. By a result of M. Anderson $(n\ge 4)$ \cite{anderson1984compactification}, finite total curvature implies that $\Sigma^{n-1}$ is diffeomorphic to a compact manifold $\bar{\Sigma}$ minus finitely many points (in one-to-one correspondence to its ends). In fact, we have:
\begin{prop}[\cite{tysk1989finiteness}]
Suppose $n\ge 4$, $\Sigma^{n-1}$ in $\RR^n$ is a complete immersed minimal hypersurface with finite total curvature. Then $\Sigma$ is regular at infinity.
\end{prop}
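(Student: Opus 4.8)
The plan is to analyze each end of $\Sigma$ separately and show that it is a graph obeying the prescribed decay. By the cited theorem of Anderson, finite total curvature already gives that $\Sigma$ has finitely many ends, each diffeomorphic to a punctured neighborhood of a point of $\bar\Sigma$. It therefore suffices to fix one end $\mathcal{E}$ and establish two things: first, that for large $|x|$ the end is, after a rotation, the graph of a function $u$ over a hyperplane $P$ with $|\nabla u|\to 0$; and second, the quantitative bound $|x|^{n-3}|u|+|x|^{n-2}|\nabla u|+|x|^{n-1}|\nabla^2 u|\le C$.

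First I would prove pointwise decay of the second fundamental form. Since $\int_\Sigma|A|^{n-1}<\infty$, the tail $\int_{\Sigma\setminus B_R}|A|^{n-1}$ tends to $0$ as $R\to\infty$. For a point $p\in\Sigma$ with $|p|=R$ large, the total curvature of $\Sigma$ inside the ball of radius $\sim R/2$ about $p$ lies below the threshold $\varepsilon_0$ of the standard $\varepsilon$-regularity theorem for minimal hypersurfaces (of Allard, or Schoen--Simon type). This yields a curvature estimate of the form
\[|A|(p)\le \frac{C}{R}\Big(\int_{\Sigma\cap B_{R/2}(p)}|A|^{n-1}\Big)^{1/(n-1)},\]
whose right-hand side tends to $0$, so that $|A|(x)\,|x|\to 0$ as $x\to\infty$ along $\mathcal{E}$; a standard bootstrap gives decay of the higher derivatives of $A$ as well. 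Next I would identify the tangent cone at infinity. Rescaling $\Sigma_\lambda=\lambda^{-1}\Sigma$ preserves $\int|A|^{n-1}$, and the curvature decay forces any blow-down limit to be a minimal cone with vanishing second fundamental form away from the origin, hence a union of hyperplanes through $0$; the topological description from Anderson ensures each end contributes a single hyperplane with multiplicity one. Consequently, for large $|x|$ the end $\mathcal{E}$ is the graph over a hyperplane $P$ of a function $u$ with $|\nabla u|\to 0$.

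The minimal surface equation for the graph,
\[\Div\!\left(\frac{\nabla u}{\sqrt{1+|\nabla u|^2}}\right)=0,\]
rewrites as
\[\Delta u=\frac{(\nabla u)^{\top}(\nabla^2 u)(\nabla u)}{1+|\nabla u|^2},\]
whose right-hand side is quadratic in $\nabla u$; since $\nabla u\to 0$ this is a small exterior perturbation of Laplace's equation on $P\setminus B_R\cong\RR^{n-1}\setminus B_R$. I would then extract the sharp decay from the asymptotic theory of exterior solutions: a bounded solution that vanishes at infinity admits an expansion whose leading term is a multiple of the fundamental solution $|x|^{-(n-3)}$ of the Laplacian in dimension $n-1$ (this is where $n-1\ge 3$, i.e. $n\ge 4$, enters). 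This gives $|u|\le C|x|^{-(n-3)}$, and rescaled interior Schauder estimates on balls of radius $\sim|x|$ upgrade it to $|\nabla u|\le C|x|^{-(n-2)}$ and $|\nabla^2 u|\le C|x|^{-(n-1)}$, which is precisely the definition of regular at infinity.

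The main obstacle is the blow-down step: establishing that each geometric end is asymptotic to a single hyperplane with multiplicity one. This is exactly where finite total curvature is indispensable, as it simultaneously prevents curvature from concentrating at infinity (so the blow-down limit is smooth) and, combined with Anderson's topological description, rules out higher-multiplicity sheets or non-flat tangent cones. Once multiplicity-one planarity is secured and the end is known to be a graph with small gradient, the remaining elliptic analysis producing the decay rate is routine.
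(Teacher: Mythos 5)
Your outline follows the same broad strategy as the paper's appendix (pointwise curvature decay from an $\epsilon$-regularity estimate, blow-down to a union of hyperplanes, then graphical representation plus exterior elliptic asymptotics to get the $|x|^{-(n-3)}$ decay), but there is a genuine gap at the pivotal middle step. You assert that ``the topological description from Anderson ensures each end contributes a single hyperplane with multiplicity one.'' Topology does not give this, and moreover you cannot even extract the blow-down limit as stated: varifold compactness for the rescalings $\lambda^{-1}\Sigma$ requires uniform local mass bounds, i.e.\ Euclidean volume growth, which you never establish. The paper devotes Step 2 precisely to this: using the Hessian comparison $(\nabla^{\Sigma})^2 r = (\nabla^{\RR^n})^2 r - \bangle{A(\cdot,\cdot),\nabla^{\RR^n} r}$ together with $|A|\cdot|x|\to 0$ and the Gauss equation, the level sets $V_r$ of $|x|$ on each end are nearly round $(n-2)$-spheres of radius $r$, so the coarea formula yields $\Vol(V\cap B_R)/(\omega_{n-1}R^{n-1})\to 1$ on each end. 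It is this density-one computation, not the topology, that forces the blow-down of each end to be a \emph{single} hyperplane with \emph{multiplicity one}.

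A second, related gap: to write the end as a graph of a function $u$ with $|\nabla u|\to 0$ over one \emph{fixed} plane $P$, you need the tangent plane at infinity to be independent of the blow-down subsequence. The curvature decay $|A|(x)\,|x|\le \mu(|x|)$ with $\mu\to 0$ does not give this by itself: the oscillation of the unit normal over the dyadic annulus $A(2^k,2^{k+1})$ is of order $\mu(2^k)$, and $\sum_k \mu(2^k)$ can diverge even when $\int_\Sigma|A|^{n-1}<\infty$ (the $\epsilon$-regularity only gives $\mu \sim \epsilon_k^{1/(n-1)}$ with $\sum_k\epsilon_k<\infty$, which is not summable in general), so a priori the normal could rotate slowly forever. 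The paper closes this by noting that finite index (which it gets from finite total curvature via Tysk) makes each end stable, and then invoking Lemma 3 of \cite{tysk1989finiteness} to conclude the multiplicity-one limit plane is unique; only then does Proposition 3 of \cite{schoen1983} (your exterior-Laplacian asymptotics, essentially) apply. A minor additional point: the $\epsilon$-regularity you invoke should be of Choi--Schoen/Simons-inequality type, with smallness of $\int|A|^{n-1}$ as hypothesis; Allard or Schoen--Simon versions presuppose mass-ratio or area bounds that are not yet available at that stage of the argument.
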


For our purposes, we use the fact that if $\Sigma$ has finite total curvature, then $|A|$ is bounded on $\Sigma$, and the induced metric on $\Sigma$ tends to the Euclidean metric near infinity in the $C^2$ sense.

\begin{prop}\label{generalizedfischer}
Let $\Sigma^{n-1}\subset\RR^n$ be a complete minimal hypersurface with index $k$ that is regular at infinity, and let $f_1,\ldots,f_k$ be $k$ $L^2$ orthonormal eigenfunctions with negative eigenvalue given by theorem \ref{fischer}. Then for any function $f\in C^\infty(\Sigma)\cap W^{1,2}(\Sigma)$ that is $L^2$ orthogonal to $f_1,\ldots,f_k$, $Q(f,f)\ge 0$. Moreover if $Q(f,f)=0$, then $f$ is a solution of $J(f)=0$.
\end{prop}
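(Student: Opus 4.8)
The plan is to upgrade Theorem~\ref{fischer} from its native class of compactly supported test functions to the larger Sobolev class $C^\infty(\Sigma)\cap W^{1,2}(\Sigma)$, using a cutoff-and-limit argument in which the regularity at infinity controls all the error terms. Let $f\in C^\infty(\Sigma)\cap W^{1,2}(\Sigma)$ be orthogonal to $f_1,\dots,f_k$ in $L^2$. First I would fix logarithmic cutoff functions $\eta_R$ that equal $1$ on $B_R(0)\cap\Sigma$, vanish outside $B_{R^2}(0)\cap\Sigma$, and satisfy $\int_\Sigma|\nabla\eta_R|^2\to 0$ as $R\to\infty$; such cutoffs exist on a surface that is regular at infinity because each end is asymptotically a hyperplane with slowly decaying graphing function, so the logarithmic cutoff trick applies just as in the Euclidean case.

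Next I would apply Theorem~\ref{fischer} not directly to $f\eta_R$ but to its projection $g_R := f\eta_R - \sum_{i=1}^k \langle f\eta_R, f_i\rangle f_i$ onto the orthogonal complement of the span of the $f_i$; since $g_R$ is compactly supported and $L^2$-orthogonal to each $f_i$, the theorem gives $Q(g_R,g_R)\ge 0$. The heart of the argument is then to show that as $R\to\infty$ one recovers $Q(f,f)$ in the limit. Because $\eta_R\to 1$ pointwise and $f\in W^{1,2}$, one has $f\eta_R\to f$ in $W^{1,2}$, so in particular $\langle f\eta_R, f_i\rangle\to\langle f,f_i\rangle = 0$; hence the correction terms $\sum_i\langle f\eta_R,f_i\rangle f_i$ tend to zero in $W^{1,2}$ and contribute nothing in the limit. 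The Dirichlet term $\int|\nabla(f\eta_R)|^2 = \int\eta_R^2|\nabla f|^2 + 2\int\eta_R f\,\nabla f\cdot\nabla\eta_R + \int f^2|\nabla\eta_R|^2$ converges to $\int|\nabla f|^2$: the first term by monotone convergence, the last by the cutoff bound $\int|\nabla\eta_R|^2\to 0$ together with the boundedness of $f$ near infinity, and the cross term by Cauchy--Schwarz. The curvature term $\int|A|^2(f\eta_R)^2\to\int|A|^2 f^2$ by dominated convergence, using that $|A|$ is bounded on $\Sigma$. Passing to the limit in $Q(g_R,g_R)\ge 0$ therefore yields $Q(f,f)\ge 0$.

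For the rigidity statement, suppose $Q(f,f)=0$. Since $f$ minimizes the quadratic form $Q$ among all competitors in the orthogonal complement of $\Span\{f_1,\dots,f_k\}$, the first variation of $Q$ must vanish against every admissible direction, which forces $J(f)=\Delta f+|A|^2 f$ to be $L^2$-orthogonal to that complement, i.e.\ $J(f)\in\Span\{f_1,\dots,f_k\}$. Writing $J(f)=\sum_i c_i f_i$ and pairing with $f_j$ using the eigenvalue equation $J(f_j)=\lambda_j f_j$ with $\lambda_j<0$, one gets $c_j=\langle J(f),f_j\rangle = \langle f, J(f_j)\rangle = \lambda_j\langle f,f_j\rangle = 0$ after justifying the integration by parts in the $W^{1,2}$ setting (again via the cutoffs, whose boundary terms vanish in the limit). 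Hence all $c_j=0$ and $J(f)=0$.

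The main obstacle I expect is the rigorous justification of the integration by parts and the limit passage on the noncompact surface: one must confirm that the regularity at infinity genuinely gives a logarithmic cutoff with $\int|\nabla\eta_R|^2\to 0$, that $f$ and its gradient decay well enough for the cross and boundary terms to disappear, and that the negative eigenfunctions $f_i$ (which decay exponentially, being genuine $L^2$ eigenfunctions on a surface asymptotic to a union of planes) do not produce stray boundary contributions when paired against the merely $W^{1,2}$ function $f$. The geometric input that makes all of this go through is precisely the statement recorded just before the proposition: $|A|$ is bounded and the metric converges to the Euclidean metric in $C^2$ near each end.
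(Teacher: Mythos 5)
Your first part fails at its key step: the projected function $g_R = f\eta_R - \sum_{i=1}^k \langle f\eta_R, f_i\rangle f_i$ is \emph{not} compactly supported, contrary to what you assert. The eigenfunctions $f_i$ are globally defined solutions of $J f_i = \lambda_i f_i$; by unique continuation they cannot vanish outside any compact set, so subtracting the correction terms destroys the compact support of $f\eta_R$. Since Theorem \ref{fischer} applies only to compactly supported test functions, you cannot conclude $Q(g_R,g_R)\ge 0$; indeed, nonnegativity of $Q$ on non-compactly-supported functions orthogonal to the $f_i$ is precisely what the proposition is trying to prove, so your argument is circular exactly where care is needed. The paper's device (spelled out in its rigidity step, and implicit in its ``standard cutoff argument'') is to put the cutoff \emph{outside} the correction: test with $\varphi\,(f + c_1 f_1 + \cdots + c_k f_k)$, where the coefficients $c_i = c_i(R)$ solve the linear system $\sum_i c_i \langle \varphi f_i, f_j\rangle = -\langle \varphi f, f_j\rangle$. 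This test function is compactly supported and orthogonal to each $f_j$ by construction; since the Gram matrix $\langle \varphi f_i, f_j\rangle$ tends to $\delta_{ij}$ as $R\to\infty$ (here one uses that each $f_i\in W^{1,2}$, which the paper establishes first by elliptic estimates), the system is solvable for large $R$ with $c_i\to 0$, and then your limiting computation goes through.

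A second, smaller error: the logarithmic cutoffs you posit, with $\int_\Sigma |\nabla\eta_R|^2 \to 0$, do not exist in this setting. The ends of $\Sigma^{n-1}$ are asymptotically planar of dimension $n-1\ge 3$, so the $2$-capacity of $B_R\cap\Sigma$ grows like $R^{n-3}$: any compactly supported $\eta$ equal to $1$ on $B_R\cap\Sigma$ satisfies $\int |\nabla\eta|^2 \ge c\,R^{n-3}$, which diverges for $n\ge 4$ (the log-cutoff trick is special to two-dimensional surfaces). Fortunately you never need this claim: the limit only requires $\int f^2|\nabla\eta_R|^2 \to 0$, which follows from $f\in L^2$ and ordinary cutoffs with $|\nabla \eta_R|\le C/R$; note also that the ``boundedness of $f$ near infinity'' you invoke is not among the hypotheses. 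Your rigidity argument, by contrast, is sound once part one is repaired, and is close in spirit to the paper's: both exploit the quadratic $t\mapsto Q(f+tg,f+tg)\ge 0$ to get $Q(f,g)=0$ for admissible $g$. Where the paper proves $Q(f,f_j)=0$ by realizing $f_j$ as a weak limit of Dirichlet eigenfunctions on exhausting balls, you obtain the same conclusion more cleanly from the symmetry $\langle J(f), f_j\rangle = \langle f, J(f_j)\rangle = \lambda_j\langle f,f_j\rangle = 0$, which is legitimate once the integration by parts is justified with the same cutoffs; this is a genuine, if minor, simplification.
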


\begin{proof}
We first observe that each $f_j$ is in fact in $W^{1,2}$. Indeed, $f_j$ is a solution of $\Delta f_j+|A|^2 f_j=\lambda _j f_j$. Since $\Sigma$ is regular at infinity, the operator $\Delta_{\Sigma}$ is a uniformly elliptic operator, and $|A|^2$ is bounded. Therefore by a covering argument and elliptic estimates, we have $\|\nabla f_j\|_{L^2(\Sigma)}\le C\|f_j\|_{L^2(\Sigma)}<\infty$.

The first statement follows from a standard cutoff argument. Now let us assume $Q(f,f)=0$. We will prove $Q(f,g)=0$ for any $g\in W^{1,2}(\Sigma)$.

Let's first assume $g$ is a compactly supported smooth function that is $L^2$ orthogonal to $f_1,\ldots,f_k$. Take a large $R>0$ so that $\supp(g)$ is contained in $B_R(0)\cap \Sigma$. Choose a cutoff function $\varphi$  which is $1$ on $B_R(0)\cap \Sigma$ and $0$ outside $B_{2R}\cap \Sigma$. Denote $f_t=\varphi (f+tg+c_1f_1+\ldots+c_kf_k)$, where $c_1,\ldots,c_k$ are properly chosen real numbers such that $f_t$ is $L^2$ orthonogal to $f_1,\ldots,f_k$. Since $g$ is $L^2$ orthogonal to $f_1,\ldots,f_k$, each $c_j$ is independent of $t$. By theorem \ref{fischer}, we have $Q(f_t,f_t)\ge 0$ for any real number $t$. Now
\[
\begin{aligned}
Q(f_t,f_t)&=t^2Q(\varphi g,\varphi g)+2t\left(Q(\varphi f,\varphi g)+\sum_j Q(\varphi g,\varphi c_jf_j)\right)\\
            &+\sum_{i,j}c_ic_j Q(\varphi f_i,\varphi f_j)+Q(\varphi f,\varphi f)+2\sum_j Q(\varphi f, \varphi c_jf_j).
\end{aligned}
\]

Note that $Q(\varphi g, \varphi f_j)=0$. Since $Q(f_t,f_t)\ge 0$ for all $t$, we conclude
\[Q(\varphi f,\varphi g)^2\le Q(\varphi g,\varphi g)\cdot \left(\sum_{i,j}c_ic_j Q(\varphi f_i,\varphi f_j)+Q(\varphi f,\varphi f)+2\sum_j Q(\varphi f, \varphi c_jf_j)\right).\]
Let $R\rightarrow \infty$. Then $c_j\rightarrow 0$ and $Q(\varphi f,\varphi f)\rightarrow Q(f,f)=0$, $Q(\varphi f,\varphi g)\rightarrow Q(f,g)$. Therefore we get $Q(f,g)^2\le 0$, so $Q(f,g)=0$.

Now if $g\in C^\infty(\Sigma)\cap W^{1,2}$ is not compactly supported but is still $L^2$ orthonogal to $f_1,\ldots,f_k$, then $g$ can be approximated by compactly supported smooth functions that are $L^2$ orthogonal to $f_1,\ldots,f_k$. This implies $Q(f,g)=0$.

Next we show $Q(f,f_j)=0$. We use the fact that each $f_j$ is in $W^{1,2}$, so it is a weak limit of a sequence of eigenfunctions of $J$ on $B_{R_i}(0)\cap \Sigma$. The statement now follows from a cutoff argument similar to the one before. Hence $Q(f,g)=0$ for $g$ in the span of $f_1,\ldots,f_k$ and in its $L^2$ orthogonal complement, therefore $Q(f,g)=0$ for each $g$ in $W^{1,2}(\Sigma)$.
\end{proof}

\section{The space of bounded harmonic functions on $\Sigma$}
The statement in this section can be found in \cite{cao1997structure} and \cite{mei2001}. We include the proof here because bounded harmonic functions are essential in the construction of test functions for the stability operator.

\begin{prop}[\cite{cao1997structure},\cite{mei2001}]
Let $n\ge 4$ and $\Sigma^{n-1}$ be a complete minimal hypersurface in $\RR^n$ with $k$ ends. Then the are $k$ linearly independent bounded harmonic functions with finite Dirichlet energy.
\end{prop}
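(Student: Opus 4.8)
The plan is to construct $k$ linearly independent bounded harmonic functions with finite Dirichlet energy by solving a sequence of Dirichlet problems on compact exhaustions and extracting a convergent limit. The key structural fact, from the preceding propositions, is that a hypersurface with finite total curvature is regular at infinity: each of the $k$ ends $E_1,\ldots,E_k$ is asymptotically a graph over a hyperplane, so that outside a large ball $B_{R_0}(0)$ the surface decomposes into $k$ connected components, and on each end the induced metric is $C^2$-close to the flat metric with $|A|\to 0$.

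\medskip

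First I would fix an exhaustion $\Sigma\cap B_{R_i}(0)=:\Omega_i$ with $R_i\to\infty$. For each index $a\in\{1,\ldots,k\}$, I would solve the Dirichlet problem $\Delta u_i^a=0$ on $\Omega_i$ with boundary data equal to $1$ on the boundary component of $\partial\Omega_i$ lying in the end $E_a$ and $0$ on the remaining boundary components. By the maximum principle, $0\le u_i^a\le 1$, so the sequence is uniformly bounded. The crucial estimate is a uniform bound on the Dirichlet energy $\int_{\Omega_i}|\nabla u_i^a|^2$ independent of $i$; here I would use that each end is regular at infinity, so that on $E_a$ one can compare with the explicit harmonic function behavior on a flat end of dimension $n-1\ge 3$ (where the relevant harmonic functions decay like $|x|^{-(n-3)}$ and have finite energy because $n-1\ge 3$). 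A standard capacity/comparison argument on the graphical ends then caps the energy. Having uniform $C^0$ and $W^{1,2}$ bounds, interior elliptic estimates give local $C^\infty$ bounds, and I would extract (via Arzelà–Ascoli and a diagonal argument) a subsequence converging in $C^\infty_{loc}$ to a bounded harmonic function $u^a$ on $\Sigma$ with $\int_\Sigma|\nabla u^a|^2<\infty$.

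\medskip

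Next I would verify that $u^1,\ldots,u^k$ are linearly independent. The natural way is to show each $u^a$ has distinct asymptotic limits along distinct ends: since the boundary data forces $u_i^a$ to be near $1$ deep in $E_a$ and near $0$ in $E_b$ for $b\ne a$, I would argue using the regular-at-infinity structure and a barrier/maximum-principle comparison on each end that $u^a\to\delta_{ab}$ as one tends to infinity along the end $E_b$. A linear combination $\sum_a\lambda_a u^a$ that vanishes identically would then force all $\lambda_a$ to vanish by evaluating the asymptotic limits along the $k$ ends, giving linear independence. (Dropping one redundant function if the limits only determine the $u^a$ up to a common additive constant is a harmless adjustment, but with $k$ ends the span of the asymptotic-limit vectors is $k$-dimensional, so independence holds.)

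\medskip

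The main obstacle I expect is the uniform energy bound and the identification of the boundary behavior: one must rule out energy escaping to infinity and confirm that the limit $u^a$ genuinely separates the ends rather than degenerating to a constant. This is precisely where the hypothesis $n\ge 4$ (equivalently $\dim\Sigma=n-1\ge 3$) is essential, because on a flat end of dimension $\ge 3$ nonconstant harmonic functions with prescribed distinct limits at infinity exist and have finite energy, whereas in dimension $2$ the logarithmic growth of the Green's function would obstruct both the energy bound and the separation of ends. Once the asymptotic separation is established on the graphical ends via comparison with the model flat-end harmonic functions, the remaining steps are routine elliptic theory.
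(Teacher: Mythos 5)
Your skeleton---exhaustion $\Omega_i=\Sigma\cap B_{R_i}(0)$, Dirichlet problems with data $\delta_{ab}$ on the boundary components, maximum principle, elliptic estimates and diagonal extraction---is exactly the paper's. The genuine gap is that you route both decisive steps (the uniform energy bound and the identification of the asymptotic values) through regularity at infinity of the ends, i.e.\ through finite total curvature: ``the key structural fact'' you invoke is not among the hypotheses. The proposition assumes only that $\Sigma$ is a complete minimal hypersurface with $k$ ends; no total-curvature or graphical-end hypothesis is available, and the paper's proof never uses one. As written, your argument proves the statement only under an extra assumption, and it is precisely your two ``standard'' steps---the capacity/comparison energy bound on graphical ends and the flat-end barriers---that consume it.

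Both steps can in fact be done with no information about the geometry of the ends. For the energy bound, the paper uses only the Dirichlet principle: for $R_1<R_2$, extending $f_{i,R_1}$ by its locally constant boundary values gives an admissible competitor on $\Sigma\cap B_{R_2}(0)$, so the energies $\int|\nabla f_{i,R}|^2$ are non-increasing in $R$; equivalently, a single fixed Lipschitz function equal to $1$ far out on $E_a$ and $0$ on the core and the other ends is a competitor for \emph{every} $R_i$ and caps all energies at once, with no asymptotics whatsoever. For non-constancy and linear independence, the paper applies the Michael--Simon Sobolev inequality to $f_i(1-f_i)$ and to $f_{i,R}\varphi$, combined with the infinite volume of each end, again avoiding any discussion of limits at infinity. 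If you prefer your route of establishing $u^a\to\delta_{ab}$ along $E_b$ (which yields strictly more information than the paper's argument), it can be repaired without graphical ends: on any minimal hypersurface $\Sigma^{n-1}\subset\RR^n$ one has $\Delta|x|^2=2(n-1)$, and a short computation then gives $\Delta |x|^{-(n-3)}=(n-3)(n-1)\,|x|^{-(n-1)}\left(|\nabla |x||^2-1\right)\le 0$ away from the origin (all operators computed on $\Sigma$), so $w=(R_0/|x|)^{n-3}$ is a \emph{universal} superharmonic barrier on each end; comparing $u_i^a$ (resp.\ $1-u_i^a$) with $w$ on $E_b\cap(B_{R_i}(0)\setminus B_{R_0}(0))$ gives the claimed asymptotics, and this is exactly where $n\ge 4$ enters, consistent with your closing remark. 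With that replacement your proof closes and becomes a legitimate alternative to the paper's Sobolev argument.
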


\begin{proof}
When $k=1$ the constant function is harmonic. Suppose $k\ge 2$. Suppose for some compact domain $K$, $\Sigma-K=E_1\cup E_2\cup\ldots \cup E_k$, where $E_1,\ldots,E_k$ are the $k$ ends. For $R$ large enough, $\Sigma\cap B_R(0)$ has $k$ boundary components. Solve the Dirichlet problem
\begin{equation*}
    \begin{cases}
    \Delta f_{i,R} &= 0 \quad \textrm{ in $\Sigma \cap B_R(0)$,} \\
    f_{i,R} &= \delta_{ij} \quad \textrm{on $\partial B_R(0)\cap E_j$, $j=1,\ldots,k$.}
    \end{cases}
\end{equation*}

By the maximum principle $0<f_{i,R}<1$ in $B_R(0)\cap \Sigma$. Using Schauder theory we get a uniform bound on $|f_{i,R}|_{C^{2,\alpha}(K)}$ for each compact $K\subset B_R(0)$. Therefore we may use Arzela-Ascoli to get a subsequence $\{f_{i,R}\}_{R}$ converging to $f_i$ in $C^{2,\beta}$ ($\beta<\alpha$). For $R_1<R_2$, the function $f_{i,R_1}$ can be extended with constant value to a function on $B_{R_2}(0)\cap \Sigma$. Since harmonic functions minimize Dirichlet energy, $\int |\nabla f_{i,R_1}|^2>\int |\nabla f_{i,R_2}^2|$. Therefore the function $f_i$ is a bounded harmonic function with finite Dirichlet energy.

Next we prove $f_i$ is not a constant function. Suppose the contrary. The function $f_i(1-f_i)$ is in $W_0^{1,2}(\Sigma)$, hence by the Michael-Simon Sobolev inequality \cite{michaelsimonsobolev}, we have for $\kappa=\frac{n}{n-2}$,
\[\left(\int |f_i(1-f_i)|^{2\kappa}\right)^{1/\kappa} \le C \int |\nabla (f_i(1-f_i))|^2 \le 4C\int |\nabla f_i|^2=0.\]
Therefore $f_i$ is identically $0$ or $1$. Without loss of generality we assume $f_i\equiv 1$ (otherwise consider $1-f_i$ instead). Choose some $l\ne i$. Now take any smooth function $\varphi$ which is identically $1$ on $E_l$, $0$ on all other ends. Then $f_{i,R}\varphi$ is compactly supported. By the Michael-Simon Sobolev inequality and the fact that $\nabla \varphi$ is compactly supported,
\begin{equation*}
    \begin{split}
        \left(\int |f_{i,R}\varphi|^{2\kappa}\right)^{1/\kappa} &\le C \int |\nabla (f_{i,R}\varphi)|^2\\
                                                                &\le 2C \int |\nabla f_{i,R}|^2 + |\nabla \varphi|^2\\
                                                                &<\infty
    \end{split}
\end{equation*}
Letting $R\rightarrow\infty$ we see that $\int f_i \varphi=\lim _{R\rightarrow \infty}\int f_{i,R}\varphi<\infty$, contradicting the fact that the $l$-th end $E_l$ has infinite volume.

By similar reasoning, the functions $f_1,\ldots,f_k$ are linearly independent. Otherwise we would have $u=c_1f_1+\ldots+c_kf_k=0$. However, $u$ is the $C^{2,\beta}$ limit of some compactly supported harmonic functions taking $c_1,\ldots,c_k$ as boundary values. An argument similar to the one before shows that such a $u$ cannot be constant.
\end{proof}

\begin{rema}
	As a technical remark, in \cite{otisdavi2014index}, the authors utilized the existence of harmonic $1$-forms in a similar way. The major difference is that, harmonic functions on $\Sigma^{n-1}$ tending to constant on each end have finite Dirichlet energy if and only if $n\ge 4$. Therefore in \cite{otisdavi2014index} the authors have to use a weighted space rather than $L^2$.
\end{rema}

\section{Proof of the main theorem}
In this section we prove Theorem \ref{rough}. We start by collecting a family of $L^2$ harmonic $1$-forms.

\begin{prop}
Let $\Sigma^{n-1}$ be a complete minimal hypersurface in $\RR^n$ that is regular at infinity. Suppose $\Sigma$ has $k$ ends. Then there are $k+b_1(\bar{\Sigma})-1$ linearly independent closed $L^2$ harmonic $1$-forms on $\Sigma$ with finite Dirichlet energy.
\end{prop}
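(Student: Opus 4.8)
The plan is to exhibit two families of $L^2$ harmonic $1$-forms---one coming ``from the ends'' and one ``from the topology''---and then to separate them by passing to de Rham cohomology. For the first family I use the $k$ bounded harmonic functions $f_1,\dots,f_k$ with finite Dirichlet energy produced in the previous proposition, constructed there as limits of solutions to the boundary value problems with data $\delta_{ij}$ on the $j$-th end. Summing those problems and invoking uniqueness gives $f_1+\cdots+f_k\equiv 1$, so the exact harmonic $1$-forms $df_1,\dots,df_k$ satisfy exactly the one relation $\sum_i df_i=0$; since the $f_i$ are linearly independent, these forms span a space of dimension $k-1$. Each $df_i$ is harmonic (being the differential of a harmonic function), lies in $L^2$ because $\int_\Sigma |df_i|^2=\int_\Sigma|\nabla f_i|^2<\infty$, and has finite Dirichlet energy: the Bochner identity $\frac12\Delta|df_i|^2=|\Hess f_i|^2+\Ric(df_i,df_i)$, the bound $|\Ric|\le |A|^2$ valid on a minimal hypersurface, and a cutoff argument give $\int_\Sigma|\nabla df_i|^2<\infty$.

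For the second family I realize $H^1(\Sigma)$ by genuine harmonic representatives. Because $\Sigma$ is regular at infinity, $\Sigma\cong\bar\Sigma\setminus\{p_1,\dots,p_k\}$ and each end is homotopy equivalent to $S^{n-2}$; since $n\ge 4$ we have $H^1(S^{n-2})=0$, so Mayer--Vietoris yields $H^1(\Sigma)\cong H^1(\bar\Sigma)$ of dimension $b_1(\bar\Sigma)$ and, from the long exact sequence for compactly supported cohomology, surjectivity of $H^1_c(\Sigma)\to H^1(\Sigma)$. I therefore fix compactly supported closed $1$-forms $\omega_1,\dots,\omega_{b_1(\bar\Sigma)}$ representing a basis of $H^1(\Sigma)$. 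For each $j$ I minimize the convex functional $u\mapsto\int_\Sigma|\omega_j-du|^2$ over the completion $\dot{W}^{1,2}_0(\Sigma)$ of $C_c^\infty(\Sigma)$ in the norm $\|\nabla u\|_{L^2}$. The Michael--Simon Sobolev inequality embeds $\dot{W}^{1,2}_0(\Sigma)$ into $L^{2(n-1)/(n-3)}(\Sigma)$, so this is a genuine Hilbert space and a minimizer $u_j$ with $du_j\in L^2$ exists, its Euler--Lagrange equation being $\delta(\omega_j-du_j)=0$. Hence $\alpha_j:=\omega_j-du_j$ is closed and coclosed, so by elliptic regularity it is a smooth harmonic $1$-form; it lies in $L^2$ (as $\omega_j$ is compactly supported and $du_j\in L^2$) and has finite Dirichlet energy by the same Bochner argument as above.

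The essential point is that $u_j$ is an honest globally defined function on $\Sigma$, so $du_j$ is exact and $[\alpha_j]=[\omega_j]$ in $H^1(\Sigma)$; in particular the $\alpha_j$ are linearly independent. To finish, suppose $\sum_i a_i\,df_i+\sum_j c_j\,\alpha_j=0$. Taking de Rham classes and using $[df_i]=0$ together with the fact that $[\omega_1],\dots,[\omega_{b_1(\bar\Sigma)}]$ form a basis of $H^1(\Sigma)$ forces every $c_j=0$; the remaining relation $\sum_i a_i\,df_i=0$ is then a multiple of $\sum_i df_i=0$. Thus the $(k-1)+b_1(\bar\Sigma)$ forms $\{df_i\}$ and $\{\alpha_j\}$ are linearly independent, which is the claimed count $k+b_1(\bar\Sigma)-1$.

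I expect the main difficulty to be the solvability step and, above all, the verification that the harmonic representative stays in the correct de Rham class rather than only in reduced $L^2$ cohomology. This is precisely where the hypothesis $n\ge 4$ (equivalently $\dim\Sigma\ge 3$) enters: $\Sigma$ is then nonparabolic and the Michael--Simon Sobolev inequality endows $\dot{W}^{1,2}_0(\Sigma)$ with a Hilbert-space structure, so the potential $u_j$ is a bona fide function in $L^{2(n-1)/(n-3)}(\Sigma)$ and $du_j$ is genuinely exact. The remaining technical ingredient is the decay estimate near infinity underlying the finite Dirichlet energy of the forms, for which I would use that the induced metric is asymptotically Euclidean and $|A|$ is bounded.
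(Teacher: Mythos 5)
Your proposal is correct, and its skeleton is the same as the paper's: the family $\{df_1,\dots,df_k\}$ coming from the bounded harmonic functions of Section 3, spanning a $(k-1)$-dimensional space because of the single relation $\sum_i df_i=0$ forced by $f_1+\cdots+f_k\equiv 1$, together with $b_1(\bar\Sigma)$ closed non-exact harmonic $1$-forms, the two families being separated by passing to de Rham cohomology. Where you genuinely go beyond the paper is the second family: the paper simply \emph{asserts} that $b_1(\bar{\Sigma})$ linearly independent closed non-exact harmonic $1$-forms $\eta_1,\dots,\eta_{b_1(\bar\Sigma)}$ exist, while you construct them --- represent a basis of $H^1(\Sigma)\cong H^1(\bar\Sigma)$ by compactly supported closed forms (possible because each end is homotopy equivalent to $S^{n-2}$ and $H^1(S^{n-2})=0$ when $n\ge 4$, so $H^1_c(\Sigma)\to H^1(\Sigma)$ is onto), then project in $L^2$ off the closure of $d\,C_c^\infty(\Sigma)$, with the Michael--Simon Sobolev inequality guaranteeing that the minimizer $u_j$ is an honest function, so that $\alpha_j=\omega_j-du_j$ is harmonic, $L^2$, and stays in the class $[\omega_j]$. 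That last verification is precisely what makes ``non-exact'' meaningful and what the joint independence argument needs (independence of the classes in $H^1$, not merely independence of the forms plus non-exactness of each), and it is the part the paper leaves unproved; your version is therefore more self-contained, at the cost of invoking the Hodge-theoretic projection and the topology of the ends. One small point to tighten: linear independence of the $f_i$ alone does not yield $\dim\Span\{df_1,\dots,df_k\}=k-1$; as in the paper's argument, one should note that $\sum_i a_i\,df_i=0$ forces $\sum_i a_i f_i=c$ for some constant $c$, hence $\sum_i (a_i-c)f_i=0$, and then linear independence of the $f_i$ gives $a_i=c$ for all $i$, i.e.\ every relation is a multiple of $\sum_i df_i=0$. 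Finally, your Bochner-plus-cutoff check that the forms have finite Dirichlet energy (using $\Ric\ge -|A|^2$ from the Gauss equation and boundedness of $|A|$ from regularity at infinity) addresses a clause of the statement that the paper's own proof does not verify.
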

\begin{proof}
Take the functions $f_1,\ldots,f_k$ constructed in section 3. Their differentials $df_1,\ldots,df_k$ are harmonic since $d$ and $\Delta$ commute. We prove that $\Span\{df_1,\ldots,df_k\}$ is $k-1$ dimensional. The function $f_1+\ldots+f_k $ is the limit of a sequence of harmonic functions with boundary values $1$. By the maximum principle, each harmonic function in the sequence is identically $1$. So $f_1+\ldots+f_k$ is also the constant function $1$. We see $df_1+\ldots+df_k=0$. Suppose $df_1,\ldots,df_j$ are linearly dependent for some $j<k$. Then $c_1df_1+\ldots+c_jdf_j=0$. Therefore $c_1f_1+\ldots+c_jf_j$ is a constant function on $\Sigma$. Combine $f_1+\ldots+f_k=1$, we get a nontrivial linear combination of $f_1,\ldots,f_k$ that equals $0$, contradicting the linear independence of $f_1,\ldots,f_k$.

If $b_1(\bar{\Sigma})>0$, then we have ${b_1(\bar{\Sigma})}$ linearly independent closed non-exact harmonic $1$-forms $\eta_1,\ldots,\eta_{b_1(\bar{\Sigma})}$. Then the set $\{df_1,\ldots,df_{k-1},\eta_1,\ldots,\eta_{b_1(\bar{\Sigma})}\}$ is a set of $k+b_1(\bar{\Sigma})-1$ linearly independent closed harmonic $1$-forms on $\Sigma$.
\end{proof}

Now let us fix some notations. For any minimal hypersurface $\Sigma^{n-1}$ in $\RR^n$, let $\bar{\nabla}$ be the Euclidean connection on $\RR^n$ and $\nabla$ be the Levi-Civita connection of the induced metric on $\Sigma$. Denote the Hodge Laplacian on $p$-forms by $\Delta=-(d\delta+\delta d)$. Suppose $\Sigma$ is two-sided with a unit normal vector $\nu$. Take two vector fields $X,Y$ on $\Sigma$. Let $S$ be the shape operator defined by $S(X)=-\bar{\nabla}_X \nu$, and let $A$ be the second fundamental form defined by $A(X,Y)=\bangle{S(X),Y}$. For two parallel vectors $\bar{W},\bar{V}$ in $\RR^n$, let $W,V$ be their projection on $\Sigma$. Let $\omega$ be a harmonic $1$-form on $\Sigma$ and $\xi$ its dual vector field. With these notations, we have

\begin{lemm}[Lemma 2.4 of \cite{savo2010index}]\label{calculation}
\begin{enumerate}
    \item $\nabla_X W=\bangle{\bar{W},\nu}S(X)$,
    \item $\nabla \bangle{\bar{W},\nu}=-S(W)$,
    \item $\Delta \bangle{\bar{W},\nu}=-|A|^2\bangle{\bar{W},\nu}$,
    \item $\Delta \bangle{W,\xi}=2\bangle{S(W),S(\xi)}-2\bangle{\bar{W},\nu}\bangle{A,\nabla \xi}$,
    \item $\Delta\left(\bangle{\bar{V},\nu}\bangle{W,\xi}\right)=-|A|^2\bangle{\bar{V},\nu}\bangle{W,\xi}+\alpha(V,W,\xi)$, where $\alpha$ is a $(0,3)$ tensor, defined by
        \[
        \begin{split}
            \alpha(V,W,\xi)=& -2\bangle{\bar{V},\nu}\bangle{S(W),S(\xi)}-2\bangle{\bar{W},\nu}\bangle{S(V),S(\xi)}\\
                            & +2\bangle{\bar{V},\nu}\bangle{\bar{W},\nu}\bangle{A,\nabla\xi}-2\bangle{\nabla_{S(V)}\xi,W}.
        \end{split}
        \]
\end{enumerate}
\end{lemm}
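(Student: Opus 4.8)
The plan is to establish the five identities in order, since each later one feeds on its predecessors; the only genuinely geometric inputs are the Gauss--Weingarten formulas, the Codazzi equation together with minimality, and the Weitzenb\"och formula for harmonic $1$-forms, after which everything reduces to bookkeeping with the product rule for the Laplacian.

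First I would prove (1) and (2) simultaneously. Writing the parallel field as $\bar W = W + \bangle{\bar W,\nu}\nu$ and applying $\bar\nabla_X$ to the identity $\bar\nabla_X\bar W = 0$, I expand using the Gauss formula $\bar\nabla_X W = \nabla_X W + A(X,W)\nu$ and the Weingarten relation $\bar\nabla_X\nu = -S(X)$. Separating the tangential and normal components of the resulting equation gives $\nabla_X W = \bangle{\bar W,\nu}S(X)$ from the tangential part, which is (1), and $X\bangle{\bar W,\nu} = -A(X,W) = -\bangle{S(W),X}$ from the normal part, which is exactly (2) once I use the self-adjointness of $S$. For (3) I would then take the divergence of (2): $\Delta\bangle{\bar W,\nu} = \Div(\nabla\bangle{\bar W,\nu}) = -\Div(S(W))$. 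Expanding $\Div(S(W)) = \sum_i\bangle{(\nabla_{e_i}S)(e_i),W} + \sum_i\bangle{S(\nabla_{e_i}W),e_i}$ in an orthonormal frame, the first sum vanishes because Codazzi makes $\nabla S$ totally symmetric and minimality forces $\tr S$, hence its derivative, to be zero, while the second sum equals $\bangle{\bar W,\nu}\tr(S^2) = |A|^2\bangle{\bar W,\nu}$ by (1), yielding (3).

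The main obstacle is (4), where harmonicity of $\omega$ enters. I would apply the product rule $\Delta\bangle{W,\xi} = \bangle{\Delta W,\xi} + 2\bangle{\nabla W,\nabla\xi} + \bangle{W,\Delta\xi}$ for the rough Laplacian and evaluate each piece. From (1) and the frame computation above one gets $\Delta W = -S^2(W)$, so $\bangle{\Delta W,\xi} = -\bangle{S(W),S(\xi)}$; the cross term collapses to $\bangle{\bar W,\nu}\bangle{A,\nabla\xi}$ again by (1); and the last term is handled by the Weitzenb\"och formula, which for a harmonic $1$-form gives $\Delta\xi = \Ric(\xi)$, combined with the Gauss equation for a minimal hypersurface, $\Ric(W,\xi) = -\bangle{S(W),S(\xi)}$. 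Here I must also use that closedness of $\omega$ makes $\nabla\xi$ a symmetric $(0,2)$-tensor. The delicate point is entirely in matching the sign conventions of the Hodge Laplacian, the Weitzenb\"och curvature term, and the shape operator, so that the three contributions assemble into the stated formula.

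Finally (5) is the scalar product rule $\Delta(gh) = (\Delta g)h + 2\bangle{\nabla g,\nabla h} + g(\Delta h)$ applied to $g = \bangle{\bar V,\nu}$ and $h = \bangle{W,\xi}$. Substituting (3) for $\Delta g$ produces the term $-|A|^2\bangle{\bar V,\nu}\bangle{W,\xi}$; substituting (2) for $\nabla g$, (4) for $\Delta h$, and the computed gradient $\nabla\bangle{W,\xi} = \bangle{\bar W,\nu}S(\xi) + \nabla_W\xi$ for the remaining pieces, and then collecting everything, using the symmetry of $\nabla\xi$ to rewrite $\bangle{S(V),\nabla_W\xi}$ as $\bangle{\nabla_{S(V)}\xi,W}$, gives precisely the tensor $\alpha(V,W,\xi)$. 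I expect steps (1)--(3) and (5) to be routine once (4) is in hand, so the bulk of the care will go into the harmonic $1$-form identity (4).
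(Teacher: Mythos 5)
Your proposal is correct, and its skeleton --- the product rule for the Laplacian, the Bochner/Weitzenb\"ock identity for harmonic $1$-forms, and the Gauss equation with minimality --- is the same as the paper's; the genuine difference is in how (4) is organized. The paper treats $\bangle{W,\xi}$ as the pointwise inner product $\bangle{\eta,\omega}$ of \emph{two} harmonic $1$-forms, where $\eta=W^\flat$ is harmonic because it is the differential of the ambient coordinate function (a harmonic function on $\Sigma$), and applies Bochner to both factors to get $2\Ric(W,\xi)$ in one stroke. You instead compute $\Delta W=-S^2(W)$ by hand from (1), Codazzi, and minimality, and invoke Bochner only for $\xi$; this is equivalent (your frame computation is exactly the Bochner identity for $\eta$, unpackaged), and it buys a proof that never needs the observation that $W^\flat$ is harmonic, at the cost of an extra Codazzi computation. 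Your proofs of (1)--(3), which the paper cites as standard, are fine. Two caveats. First, with the paper's convention $\Delta=-(d\delta+\delta d)$, your three contributions assemble to $\Delta\bangle{W,\xi}=-2\bangle{S(W),S(\xi)}+2\bangle{\bar W,\nu}\bangle{A,\nabla\xi}$, i.e.\ the \emph{opposite} overall sign from the printed statement of (4); this is not an error on your side, since the paper's own derivation produces the same signs, and it is this signed version (not the printed one) that is consistent with (5) and with the identity for $\Delta f_{\omega,ij}$ used in Section 4 --- the printed (4) carries the positive-Laplacian convention of Savo. So do not force your terms to match the printed formula. Second, in (5) you use closedness of $\omega$ (symmetry of $\nabla\xi$) to convert $\bangle{S(V),\nabla_W\xi}$ into $\bangle{\nabla_{S(V)}\xi,W}$; the paper's computation produces $\bangle{W,\nabla_{S(V)}\xi}$ directly and needs no closedness, so the lemma holds for arbitrary harmonic $1$-forms, though for the closed forms actually used in the paper your extra hypothesis is harmless.
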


\begin{proof}
$(1)$ to $(3)$ are standard facts about minimal hypersurfaces in Euclidean spaces. To prove $(4)$, take $\eta$ to be the dual $1$-form of the vector field $W$. Then $\omega$ and $\eta$ are both harmonic $1$-forms. Denote the connection Laplacian by $\nabla^2$. We have
\[\Delta \bangle{W,\xi}=\nabla^2\bangle{\eta,\omega}=\bangle{\nabla^2 \eta, \omega}+\bangle{\eta, \nabla^2 \omega}+2\bangle{\nabla \eta,\nabla \omega}.\]
By the Bochner identity for the harmonic $1$-forms $\eta$ and $\omega$, we have $0=\Delta \omega=\nabla^2 \omega-\Ric(\xi,\cdot)$, $0=\Delta \eta=\nabla^2 \eta-\Ric(W,\cdot)$. Therefore $\bangle{\nabla^2 \eta,\omega}=\bangle{\nabla^2 \omega, \eta}=\Ric(W,\xi)$. Using the Gauss equation for $\Sigma^{n-1}$ in $\RR^n$, we see that $\Ric(X,\xi)=-\bangle{S(X),S(\xi)}$. Also by $(1)$, $\nabla W=\bangle{\bar{W},\nu}A$. So $\bangle{\nabla \eta,\nabla \omega}=\bangle{\nabla W,\nabla \xi}=\bangle{\bar{W},\nu}\bangle{A, \nabla \xi}$. This proves $(4)$.

To prove $(5)$, we see that
\[\Delta\left(\bangle{\bar{V},\nu} \bangle{W,\xi}\right)=\left(\Delta\bangle{\bar{V},\nu}\right)\bangle{W,\xi}+\bangle{\bar{V},\nu}\left(\Delta \bangle{W,\xi}\right)+2\bangle{\nabla\bangle{\bar{V},\nu},\nabla\bangle{W,\xi}}.\]
We use $(3)$ and $(4)$ to simplify the first two terms. For the third term, we have
\[
\begin{split}
    \bangle{\nabla\bangle{\bar{V},\nu},\nabla\bangle{W,\xi}}&=\bangle{S(V),\nabla \bangle{W,\xi}}\\
                                                            &=\bangle{\nabla_{S(V)}W,\xi}+\bangle{W,\nabla_{S(V)}\xi}\\
                                                            &=\bangle{\bar{W},\nu}\bangle{S^2(V),\xi}+\bangle{\nabla_{S(V)}\xi,W}\\
                                                            &=\bangle{\bar{W},\nu}\bangle{S(V),S(\xi)}+\bangle{\nabla_{S(V)}\xi,W},
\end{split}
\]
where the first equality is true by $(2)$, and the third equality by $(1)$, the fourth equality by the fact that $S$ is symmetric.

Using the above equality and $(3)$, $(4)$ we get $(5)$.
\end{proof}

Now we are ready to prove Theorem \ref{rough}. Take $x_1,\ldots,x_n$ to be the standard coordinates of $\RR^n$. The vector fields $\bar{V_1}=\paop{x_1},\ldots,\bar{V_n}=\paop{x_n}$ are parallel vector fields in $\RR^n$. Their projections onto $\Sigma$ are denoted by $V_1,\ldots,V_n$. Define the vector fields $X_{ij}=\bangle{V_i,\nu}V_j-\bangle{V_j,\nu}V_i$. For a harmonic $1$-form $\omega$ on $\Sigma$ dual to a vector field $\xi$, define the functions $f_{\omega,ij}=\bangle{\omega,X_{ij}}=\bangle{\bar{V_i},\nu}\bangle{V_j,\omega}-\bangle{\bar{V_j},\nu}\bangle{V_i,\omega}$ for $1\le i<j\le n$. It is clear that $f_{\omega,ij}$ is in $L^2$. By lemma \ref{calculation},
\begin{equation}\label{identity1}
\Delta f_{\omega,ij}=-|A|^2 f_{\omega,ij} -2\bangle{\nabla_{S(V_i)}\xi,V_j}+2\bangle{\nabla_{S(V_j)}\xi,V_i}.
\end{equation}

Regarding the integrability of $f_{\omega,ij}$, we have the following
\begin{lemm}
	$f_{\omega, ij}$ defined above is in $W^{1,2}(\Sigma)$.
\end{lemm}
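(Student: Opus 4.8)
The plan is to show directly that the differential $\nabla f_{\omega,ij}$ lies in $L^2(\Sigma)$; since membership $f_{\omega,ij}\in L^2(\Sigma)$ was already recorded just before the lemma, this yields $f_{\omega,ij}\in W^{1,2}(\Sigma)$. First I would differentiate the defining expression
\[
f_{\omega,ij}=\langle\bar V_i,\nu\rangle\langle V_j,\omega\rangle-\langle\bar V_j,\nu\rangle\langle V_i,\omega\rangle
\]
by the Leibniz rule, producing terms of the two shapes $(\nabla\langle\bar V_i,\nu\rangle)\,\langle V_j,\omega\rangle$ and $\langle\bar V_i,\nu\rangle\,\nabla\langle V_j,\omega\rangle$, together with the analogous pair in which $i$ and $j$ are swapped.

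To control these I would invoke Lemma \ref{calculation}. Part $(2)$ gives $\nabla\langle\bar V_i,\nu\rangle=-S(V_i)$, hence $|\nabla\langle\bar V_i,\nu\rangle|\le |A|$ since $|V_i|\le 1$. For the factor $\nabla\langle V_j,\omega\rangle$ I would expand, for a test direction $X$, the identity $X\langle V_j,\omega\rangle=(\nabla_X\omega)(V_j)+\omega(\nabla_X V_j)$ and apply part $(1)$, namely $\nabla_X V_j=\langle\bar V_j,\nu\rangle S(X)$, so that the connection term is bounded by $|A|\,|\omega|$ while the first term is bounded by $|\nabla\omega|$. Combining these with $|\langle\bar V_i,\nu\rangle|\le 1$ and $|\langle V_j,\omega\rangle|\le|\omega|$ yields the pointwise estimate
\[
|\nabla f_{\omega,ij}|\le C\big(|A|\,|\omega|+|\nabla\omega|\big)
\]
for a purely dimensional constant $C$.

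I would then integrate. Squaring and using $(a+b)^2\le 2a^2+2b^2$,
\[
\int_\Sigma|\nabla f_{\omega,ij}|^2\le 2C^2\Big(\sup_\Sigma|A|^2\int_\Sigma|\omega|^2+\int_\Sigma|\nabla\omega|^2\Big).
\]
The three inputs on the right are exactly the hypotheses at hand: finite total curvature forces $|A|$ to be bounded on $\Sigma$ (recorded in Section 2), while $\omega$ is an $L^2$ harmonic $1$-form of finite Dirichlet energy furnished by the preceding proposition, so that both $\int_\Sigma|\omega|^2$ and $\int_\Sigma|\nabla\omega|^2$ are finite. Hence the right-hand side is finite and $f_{\omega,ij}\in W^{1,2}(\Sigma)$.

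The one place to watch is the connection term $\omega(\nabla_X V_j)$ arising in the derivative of $\langle V_j,\omega\rangle$: a priori this involves $\nabla V_j$, but Lemma \ref{calculation}(1) shows it equals $\langle\bar V_j,\nu\rangle S(X)$, i.e. a pure curvature factor times $\omega$. It is precisely this term that forces us to use boundedness of $|A|$ together with $\omega\in L^2$, rather than the finite Dirichlet energy of $\omega$ alone. All the remaining contributions are controlled directly by $|\nabla\omega|$ and $|\omega|$, so beyond this bookkeeping of the pointwise bounds I do not expect any genuine obstacle.
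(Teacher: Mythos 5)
Your proof is correct, but it takes a genuinely different route from the paper's. The paper never differentiates $f_{\omega,ij}$ directly: it uses the identity (\ref{identity1}) from Lemma \ref{calculation} to write $\Delta f=g$ with $f=f_{\omega,ij}$ and $g\in L^2(\Sigma)$, and then runs a Caccioppoli-type argument --- multiply by $f$ times a cutoff, integrate by parts, absorb the gradient term, and sum over a covering of $\RR^n$ by cubes --- to conclude $\int_\Sigma|\nabla f|^2\le 3\|f\|_{L^2(\Sigma)}\|g\|_{L^2(\Sigma)}+(3/\epsilon)\|f\|_{L^2(\Sigma)}$. In effect the paper exploits the general principle that, on a manifold of uniformly bounded geometry, $u\in L^2$ and $\Delta u\in L^2$ imply $u\in W^{1,2}$; you instead prove the stronger pointwise bound $|\nabla f_{\omega,ij}|\le C\left(|A|\,|\omega|+|\nabla\omega|\right)$ via the Leibniz rule and parts $(1)$, $(2)$ of Lemma \ref{calculation}, and then integrate. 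Your version is shorter and more transparent, and it is worth noting that the two arguments rest on the same analytic inputs: the paper's assertion that $g\in L^2(\Sigma)$ also requires $\int_\Sigma|A|^2|\nabla\xi|^2<\infty$, since the terms $\langle\nabla_{S(V_i)}\xi,V_j\rangle$ are only controlled by $|A|\,|\nabla\omega|$ --- exactly the finite Dirichlet energy you invoke. The one point you should tighten: in the proof of Theorem \ref{rough} this lemma is applied to arbitrary elements of the $l$-dimensional space of \emph{all} $L^2$ harmonic $1$-forms on $\Sigma$, not only to the specific forms produced in Proposition 4.1, so the finiteness of $\int_\Sigma|\nabla\omega|^2$ should not be cited from that proposition but established for every $L^2$ harmonic $1$-form; this follows from the same covering-plus-elliptic-estimates argument used for the eigenfunctions in the proof of Proposition \ref{generalizedfischer} (equivalently, from Bochner's formula, Kato's inequality and a cutoff), using that $\Sigma$ is regular at infinity and hence has bounded $|A|$ and uniformly controlled local geometry. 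With that substitution your argument is complete.
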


\begin{proof}
Fix an $\omega$ and a pair of $(i,j)$ as above. Note that $\omega$ is an $L^2$ one form. For the simplicity of notations, let us denote $f_{\omega,ij}$ by $f$ and $-|A|^2f_{\omega,ij}-2\bangle{\nabla_{S(V_i)}\xi,V_j}+2\bangle{\nabla_{S(V_j)}\xi,V_i}$ by $g$. Since $|\omega|\in L^2(\Sigma)$, we know that $f\in L^2(\Sigma)$. Also since $|A|$ is bounded on $\Sigma$, $g\in L^2(\Sigma)$. Fix a point $p\in \Sigma$. Take a cutoff function $\eta$ which is supported in the Euclidean ball $B_2(p)$ and is identically $1$ in $B_1(p)$, and satisfies $|\nabla \eta|\le 2$. Since $\Delta f=g$, we have
\[\int_{B_2(p)\cap \Sigma}f\Delta f\eta=\int_{B_2(p)\cap \Sigma}\eta fg.\]
Integrate by parts and using the fact that $\eta$ is zero on $\partial B_2(p)$, we have that
\[\int_{B_2(p)\cap \Sigma}\eta|\nabla f|^2=-\int_{B_2(p)\cap\Sigma}\eta gf-\int_{B_2(p)\cap\Sigma}f\nabla f\cdot\nabla \eta.\]

By Cauchy-Schwartz, we have
\[\int_{B_1(p)\cap\Sigma}|\nabla f|^2\le \|f\|_{L^2(\Sigma)}\|g\|_{L^2(B_2(p))}+\int_{B_{2}(p)\cap\Sigma}|f\nabla
f \cdot \nabla\eta|.\]
Let $\epsilon>0$ be a small number chosen later and using arithmatic-geometric mean inequality, we deduce that
\[\int_{B_1(p)\cap\Sigma}|\nabla f|^2\le \|f\|_{L^2(\Sigma)}\|g\|_{L^2(B_2(p))} +\left(\epsilon \int_{B_2-B_1}|\nabla f|^2+\frac{1}{\epsilon}\int_{B_2-B_1}|f|^2\right).\]

Let $C$ be the cube inscribed to $B_1(p)$. Then $B_2(p)$ is contained in $3C$, where $3C$ is the cube with the same center under homothety by a factor of $3$. Then
\[\int_{C\cap\Sigma}|\nabla f|^2\le \|f\|_{L^2(\Sigma)}\|g\|_{L^2(B_2(p))}+\left(\epsilon \int_{(3C-C)\cap \Sigma}|\nabla f|^2+\frac{1}{\epsilon}\int_{(3C-C)\cap \Sigma}|f|^2\right).\]
Now choose $\epsilon=3^{-n}/3$. Cover $\RR^n$ by parallel cubes with no common interior and side length same as $C$, and also consider the $3$-scale homothety of each cube in the covering. We deduce that each point is covered by at most $3^ n$ times. Add the above inequality for each cube in this covering, we then get
\[\int_{\Sigma}|\nabla f|^2\le 3\|f\|_{L^2(\Sigma)}\|g\|_{L^2(\Sigma)}+3/\epsilon \|f\|_{L^2(\Sigma)},\]
hence $f\in W^{1,2}(\Sigma)$.
\end{proof}

Suppose $\ind(\Sigma)=I$ is finite. By Proposition \ref{generalizedfischer}, there exist $I$ $W^{1,2}$ smooth eigenfunctions $\varphi_1,\ldots,\varphi_I$ of the Jacobi operator $\Delta+|A|^2$. Consider the linear system on $\omega$
\begin{equation}\label{linearsystem}
\int_\Sigma \varphi_1 f_{\omega,ij}=0, \int_\Sigma \varphi_2 f_{\omega,ij}=0,\cdots,\int_\Sigma \varphi_I f_{\omega,ij}=0,
\end{equation}
where $i,j$ run through $1\le i<j\le n$.

Denote by $l$ the dimension of the space of $L^2$ harmonic $1$-forms on $\Sigma$. Now the linear system (\ref{linearsystem}) has $I\cdot \frac{n(n-1)}{2}$ equations. If $l>I\cdot \frac{n(n-1)}{2}$ then there exists at least $l-I\cdot \frac{n(n-1)}{2}$ linearly independent harmonic $1$-forms for which (\ref{linearsystem}) is satisfied by $f_{\omega, ij}$, for each pair of $i,j$ with $1\le i<j\le n$.  For each such $\omega$, by proposition \ref{generalizedfischer}, $Q(f_{\omega,ij},f_{\omega,ij})\ge 0$ for each pair of $1\le i<j\le n$. On the other hand,
\[
\begin{split}
    \sum &_{1\le i<j\le n}Q(f_{\omega,ij},f_{\omega,ij})=-\sum_{1\le i<j\le n}\int_{\Sigma}f_{\omega,ij}(\Delta+|A|^2)f_{\omega,ij}\\
                        &=2\int_\Sigma \sum_{1\le i<j \le n} \left(\bangle{\bar{V_i},\nu}\bangle{V_j,\xi}-\bangle{\bar{V_j},\nu}\bangle{V_i,\xi}\right)\left(\bangle{\nabla_{S(V_i)}\xi,V_j}-\bangle{\nabla_{S(V_j)}\xi,V_i}\right)\\
                        &=2\int_\Sigma\sum_{1\le i,j \le n}\left(\bangle{\bar{V_i},\nu}\bangle{V_j,\xi}-\bangle{\bar{V_j},\nu}\bangle{V_i,\xi}\right)\left(\bangle{\nabla_{S(V_i)}\xi,V_j}-\bangle{\nabla_{S(V_j)}\xi,V_i}\right)\\
                        &=2\int_\Sigma\sum_{i,j}\left(\bangle{\bar{V_i},\nu}\bangle{V_j,\xi}\bangle{\nabla_{S(V_i)}\xi,V_j}-\bangle{\bar{V_j},\nu}\bangle{V_i,\xi}\bangle{\nabla_{S(V_i)}\xi,V_j}\right)\\
\end{split}
\]
For the first summand,
\[
\begin{split}
    \sum_{i,j}&\bangle{\bar{V_i},\nu}\bangle{V_j,\xi}\bangle{\nabla_{S(V_i)}\xi,V_j}=\sum_i\bangle{\bar{V_i},\nu}\sum_j\bangle{\bar{V_j},\xi}\bangle{\nabla_{S(V_i)}\xi,\bar{V_j}}\\
              &=\sum_i \bangle{\bar{V_i},\nu}\bangle{\xi,\nabla_{S(V_i)}\xi}\\
              &=\frac{1}{2}\sum_i \bangle{\bar{V_i},\nu}\bangle{S(V_i),\nabla|\xi|^2}\\
              &=\frac{1}{2}\sum_i \bangle{\bar{V_i},\nu}\bangle{V_i,S(\nabla|\xi|^2)}\\
              &=\frac{1}{2} \bangle{\nu,S(\nabla|\xi|^2)}=0.
\end{split}
\]
Also
\[
\begin{split}
    \sum_{i,j}&\bangle{\bar{V_j},\nu}\bangle{V_i,\xi}\bangle{\nabla_{S(V_i)}\xi,V_j}=\sum_i \sum_j \bangle{\bar{V_j},\nu}\bangle{\nabla_{S(V_i)}\xi,\bar{V_j}}\bangle{V_i,\xi}\\
              &=\sum_i \bangle{\nu,\nabla_{S(V_i)}\xi}\bangle{V_i,\xi}=0.
\end{split}
\]

Therefore each $Q(f_{\omega,ij},f_{\omega,ij})$ is equal to zero. By proposition \ref{generalizedfischer}, $f_{\omega,ij}$ is in the kernel of Jacobi operator. To conclude the proof of Theorem $\ref{rough}$, we prove the $l-\frac{n(n-1)}{2}I$ linearly independent harmonic $1$-forms generate at least $\frac{2}{n(n-1)}l-I$ linearly independent functions $f_{\omega,ij}$. Then $\nul(\Sigma)\ge \frac{2}{n(n-1)}l-I\ge \frac{2}{n(n-1)}(\# \textrm{ends}+b_1(M)-1)-\ind(\Sigma)$. In fact, we have:

\begin{prop}\label{linearalgebra}
Let $\sH$ be an $h$ dimensional subspace of $L^2$ harmonic $1$-forms on $\Sigma$. Then the set $\{f_{\omega,ij}:\omega \in \sH, 1\le i<j \le n\}$ has at least $\frac{2}{n(n-1)}h$ linearly independent $L^2$ smooth functions on $\Sigma$.
\end{prop}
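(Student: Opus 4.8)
The plan is to recast the proposition as a rank estimate for a single linear map and reduce everything to a pointwise fact in linear algebra. Let $W\subset L^2(\Sigma)$ be the span of the (smooth, $L^2$) functions $\{f_{\omega,ij}:\omega\in\sH,\ 1\le i<j\le n\}$; the proposition asserts $\dim W\ge \frac{2}{n(n-1)}h$. Since each assignment $\omega\mapsto f_{\omega,ij}$ is linear in $\omega$, I would assemble them into one linear map
\[
\Phi\colon \sH\longrightarrow \bigoplus_{1\le i<j\le n} W,\qquad \Phi(\omega)=(f_{\omega,ij})_{i<j}.
\]
If $\Phi$ is injective, then $h=\dim\sH\le \binom{n}{2}\dim W=\frac{n(n-1)}{2}\dim W$, which is exactly the claimed inequality. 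So the whole argument rests on the injectivity of $\Phi$.

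To prove injectivity, suppose $f_{\omega,ij}\equiv 0$ for every pair $i<j$, and fix a point $p\in\Sigma$. I would record two vectors in $\RR^n$ built from the data at $p$: set $a_i=\bangle{\bar{V_i},\nu}$ and $b_i=\bangle{V_i,\omega}$. Because $\bar{V_i}=V_i+a_i\nu$ and $\xi$ is tangent to $\Sigma$ (so $\bangle{\xi,\nu}=0$), one checks $b_i=\bangle{\xi,\bar{V_i}}$. Since $\bar{V_1},\dots,\bar{V_n}$ are the standard coordinate frame, this identifies $(a_1,\dots,a_n)$ as the coordinate vector of the unit normal $\nu$ and $(b_1,\dots,b_n)$ as the coordinate vector of $\xi$. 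Two consequences are crucial: the two vectors are orthogonal in $\RR^n$ (as $\xi\perp\nu$), and the first is a unit vector, hence nonzero.

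The vanishing hypothesis reads $a_ib_j-a_jb_i=0$ for all $i<j$, i.e.\ the coordinate vectors of $\nu$ and $\xi$ are linearly dependent. Together with orthogonality this forces $\xi(p)=0$: writing $\xi=\lambda\nu$ in coordinates and pairing with $\nu$ gives $0=\bangle{\xi,\nu}=\lambda|\nu|^2=\lambda$, so $\xi(p)=0$. As $p$ was arbitrary, $\xi\equiv 0$ and hence $\omega=0$, establishing that $\Phi$ is injective and completing the proof.

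The main obstacle is the injectivity of $\Phi$, and within it the single geometric observation that makes it work: once the two vectors $(\bangle{\bar{V_i},\nu})_i$ and $(\bangle{V_i,\omega})_i$ are recognized as the normal $\nu$ and the dual field $\xi$, the orthogonality coming from $\xi$ being tangent plays off against the parallelism forced by the vanishing of all $f_{\omega,ij}$, and this collapses $\xi$ to zero. Everything else is the formal dimension count $h\le\binom{n}{2}\dim W$.
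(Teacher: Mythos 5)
Your proof is correct and follows essentially the same route as the paper's: both establish injectivity of the linear map $\omega\mapsto(f_{\omega,ij})_{1\le i<j\le n}$ --- the paper by deducing $\bangle{V_i,\omega}=c\bangle{\bar{V_i},\nu}$ from the vanishing of the $2\times 2$ minors and then evaluating in a frame adapted at a point, you by identifying $(\bangle{\bar{V_i},\nu})_i$ and $(\bangle{V_i,\omega})_i$ with the coordinate vectors of $\nu$ and $\xi$ and playing parallelism against orthogonality --- and both finish with an elementary dimension count. The only cosmetic difference is in that count: the paper projects the image onto its components to find a single pair $(i,j)$ for which $\{f_{\omega,ij}:\omega\in\sH\}$ already spans a space of dimension $\frac{2}{n(n-1)}h$, whereas you bound $h\le \frac{n(n-1)}{2}\dim W$ for the total span $W$; both yield the stated conclusion.
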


\begin{proof}
Define a map $\sF:\sH\rightarrow \oplus_{i=1}^{n(n-1)/2}C^{\infty}(M)$, $\omega \mapsto (f_{\omega,ij}: 1\le i<j \le n)$. We will prove that $\sF$ is injective. Suppose $\omega$ is a $L^2$ harmonic $1$-form such that $\sF(\omega)=0$. That is, $f_{\omega,ij}=0$ or $\bangle{\bar{V_i},\nu}\bangle{V_j,\omega}=\bangle{\bar{V_j},\nu}\bangle{V_i,\omega}$ for each pair $1\le i<j \le n$. Then $\bangle{V_i,\omega}=c\bangle{\bar{V_i},\nu}$ for some constant $c$. Since $\bar{V_1},\ldots,\bar{V_n}$ is an orthonormal basis for $\RR^n$, $\bangle{V,\omega}=c\bangle{\bar{V},\nu}$ for each parallel vector field $\bar{V}$ in $\RR^n$ and its projection $V$ on $\Sigma$. In particular, at a point $p\in \Sigma$, choose $\bar{V_1}=\nu(p)$ and $V_2,\ldots,V_n$ be a basis for $T_p \Sigma$, we get $c=0$ and $\omega(p)=0$.

Denote by $p_{ij}$ the projection of $\oplus_{i=1}^{n(n-1)/2} C^\infty(\Sigma)$ onto the $ij$-th component. Since $\sum_{1\le i<j\le n}\dim(p_{ij}(\sF(\sH))) \ge \dim(\sF(\sH))$, at least one pair $(i,j)$ satisfies
\[\dim(p_{ij}(\sF(\sH))) \ge \frac{2}{n(n-1)}\dim (\sF(\sH))=\frac{2}{n(n-1)}h.\]
For this particular $(i,j)$, the space of functions spanned by $\{f_{\omega,ij}:\omega \in \sH\}$ are at least $\frac{2}{n(n-1)}h$ dimensional.
\end{proof}

\begin{rema}
Let us look closer at the equality case in the proof of Theorem \ref{rough}. For any harmonic $1$-form $\omega$ with $f_{\omega,ij}$, $1\le i<j \le n$, in the kernel of Jacobi operator, we have $0=-\frac{1}{2}(\Delta + |A^2|)f_{\omega,ij}=\bangle{\nabla_{S(V_i)}\omega,V_j}-\bangle{\nabla_{S(V_j)}\omega,V_i}$. Locally, every $\omega$ can be written as $d\phi$ for some smooth harmonic function $\phi$. Then $\bangle{\nabla_{S(V_i)}\omega,V_j}=\bangle{\nabla_{S(V_j)}\omega,V_i}$ is equivalent to $\Hess \phi(S(V_i),V_j)=\Hess\phi (S(V_j),V_i)$. Since $\{V_i\}$ is a basis for $T\Sigma$, we conclude that $\Hess\phi (S(X),Y)=\Hess\phi (S(Y),X)$ for every pair of tangent vectors $X,Y$. Now taking a local orthonormal frame of principal vectors on $\Sigma$, we see that the above condition is equivalent to $\Hess \phi$ being diagonalized by principal vectors of $\Sigma$. We are able to bound the dimension of the space of such functions $\phi$ when there is a point on $\Sigma$ where all principal curvatures are distinct.
\end{rema}

\section{Rigidity case}
We prove that when $\Sigma^{n-1}$ in $\RR^n$ satisfies that there is a point where all principal curvatures are distinct, the space of $L^2$ harmonic $1$-forms on $\Sigma$ satisfying $(\Delta+|A|^2)f_{\omega,ij}=0$, for each pair of $(i,j)$, is at most $2n-3$ dimensional. 

\begin{prop}
Let $\Sigma^{n-1}$ be a connected minimal submanifold of an analytic manifold $N^{n}$. Suppose that at one point $p$ on $\Sigma$, all the principal curvatures of $\Sigma$ are different. Then the dimension of the function space
\[\{\phi: \Delta\phi =0, \Hess\phi (S(X),Y)=\Hess \phi (S(Y),X),\textrm{ for all vector fields $X,Y$}\}\]
is at most $2n-2$.
\end{prop}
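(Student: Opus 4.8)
The plan is to reinterpret the pointwise condition as a commutation relation and then show the resulting overdetermined system is of finite type, so that a solution is determined by a bounded amount of data at a single point.

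First I would rephrase the hypothesis. Writing $\Hess\phi$ as the self-adjoint operator $H$ defined by $\Hess\phi(X,Y)=\langle H(X),Y\rangle$, the condition $\Hess\phi(S(X),Y)=\Hess\phi(S(Y),X)$ is exactly $HS=SH$; that is, the Hessian of $\phi$ commutes with the shape operator at every point. Since all principal curvatures are distinct at $p$, they remain distinct on a connected neighborhood $U$ of $p$, where the eigenspaces of $S$ are one-dimensional and depend analytically on the point. Hence on $U$ there is an analytic orthonormal frame $e_1,\dots,e_{n-1}$ of principal directions, and the commutation relation becomes the statement that $\Hess\phi$ is diagonal in this frame: $\Hess\phi(e_a,e_b)=0$ for $a\neq b$. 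Together with $\Delta\phi=\sum_a\Hess\phi(e_a,e_a)=0$, the admissible $\phi$ on $U$ are precisely the harmonic functions whose Hessian, in the principal frame, is diagonal and trace-free.

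I would then prolong this system. Introduce the diagonal entries $\lambda_a:=\Hess\phi(e_a,e_a)$, subject to $\sum_a\lambda_a=0$, and consider the tuple $(\phi,\{e_a\phi\},\{\lambda_a\})$. Differentiating $\nabla\phi$ and using diagonality immediately expresses all first derivatives $e_c(e_b\phi)$ through the $\lambda_b$, the $e_a\phi$, and the (analytic) connection coefficients of the frame. The crux is to express the first derivatives $e_c\lambda_a$ in the same way. Differentiating the off-diagonal constraints $\Hess\phi(e_a,e_b)=0$ ($a\neq b$) and keeping track of the connection terms yields, for the third covariant derivative $T_{cab}=\nabla^3\phi(e_c,e_a,e_b)$, the exact relation $T_{cab}=-\langle\nabla_{e_c}e_a,e_b\rangle\lambda_b-\langle\nabla_{e_c}e_b,e_a\rangle\lambda_a$ whenever $a\neq b$. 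Because $e_c\lambda_a=T_{caa}$ and, by the symmetry of $\nabla^3\phi$ up to curvature terms (the Ricci identity, which only introduces expressions linear in $\nabla\phi$), $T_{caa}$ for $c\neq a$ reduces to one of these off-diagonal quantities, one obtains $e_c\lambda_a$ in terms of the $\lambda_b$ and $\nabla\phi$; the remaining derivatives $e_a\lambda_a$ are recovered from the trace-free condition $\sum_b e_c\lambda_b=0$. This is the step I expect to be the main obstacle: verifying that the off-diagonal constraints, differentiated once, close the system without producing new independent unknowns, and correctly bookkeeping the connection and curvature corrections.

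Once this is done, every first derivative of the tuple $(\phi,\{e_a\phi\},\{\lambda_a\})$ is a linear expression in the tuple itself with analytic coefficients, so the tuple is a parallel section of a linear connection on a bundle of rank $1+(n-1)+(n-2)=2n-2$ over $U$. A parallel section is determined by its value at $p$, so the space of solutions on $U$ has dimension at most $2n-2$. Finally, since $\Sigma$ is a minimal submanifold of an analytic manifold it is itself analytic, hence so is any harmonic $\phi$; as $\Sigma$ is connected, a solution vanishing on the open set $U$ vanishes identically by unique continuation. Therefore restriction to $U$ is injective on the global solution space, and the dimension bound $2n-2$ holds on all of $\Sigma$.
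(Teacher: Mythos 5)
Your proof is correct, and it reaches the bound $1+(n-1)+(n-2)=2n-2$ by a mechanism genuinely different from the paper's. The paper also passes to a principal frame, where the hypothesis becomes that $\Hess\phi$ is diagonal and harmonicity makes it trace-free, but from there it works entirely with the infinite jet of $\phi$ at the single point $p$: using analyticity of $\Sigma$ (hence of $\phi$), it shows by induction on the order $j$ that every covariant derivative $\nabla^j\phi(p)$ vanishes once $\phi(p)$, $\nabla\phi(p)$, and $n-2$ diagonal Hessian entries vanish at $p$ --- derivatives with two unequal indices are reduced, after commuting indices (the curvature corrections being lower order, hence zero by induction), to derivatives of the identically vanishing off-diagonal Hessian entries, while pure derivatives $\nabla^j_{e_1,\ldots,e_1}\phi$ are converted to mixed ones by harmonicity; the unique extension property of analytic functions then gives the global bound. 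You instead prolong the system to the tuple $(\phi,\{e_a\phi\},\{\lambda_a\})$ and close it at first order: your identity $\nabla^3\phi(e_c,e_a,e_b)=-\bangle{\nabla_{e_c}e_a,e_b}\lambda_b-\bangle{\nabla_{e_c}e_b,e_a}\lambda_a$ for $a\ne b$ is correct (the leading term $e_c(\Hess\phi(e_a,e_b))$ vanishes because the off-diagonal entries vanish identically on $U$, not just at $p$), the Ricci identity does turn $e_c\lambda_a=\nabla^3\phi(e_c,e_a,e_a)$, $c\ne a$, into such an off-diagonal quantity plus terms linear in $\nabla\phi$, and the trace condition recovers $e_a\lambda_a$; so the tuple is a parallel section of a linear connection on a rank-$(2n-2)$ bundle, and injectivity of restriction-to-$U$ follows from unique continuation. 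In effect your closure identities are the pointwise-identity strengthening of the paper's vanishing statements. What your route buys: the local dimension count requires no analyticity at all (parallel sections are determined by one value in the smooth category), with analyticity quarantined to the final globalization step, where standard unique continuation for harmonic functions would also suffice. What the paper's route buys: it avoids setting up the bundle and verifying identities on a whole neighborhood, needing only vanishing statements at $p$ under the inductive hypothesis, at the price of invoking analyticity from the outset. Both arguments pivot on the same $2n-2$ initial data, so the difference is one of technique rather than of the underlying idea.
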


\begin{proof}
Take an orthonormal frame in a small neighborhood of the point $p$ consisting of principal vectors $e_1,\ldots,e_{n-1}$ with corresponding principal curvatures $\lambda_1,\ldots,\lambda_{n-1}$ (all distinct), respectively. Then for any function $\phi$ with $\Hess \phi(S(X),Y)=\Hess \phi(S(Y),X)$, letting $X=e_i$ and $Y=e_j$ for $i\ne j$, we get $\Hess \phi (e_i,e_j)=0$. Also $\Delta \phi=0$ implies $\sum_i\Hess \phi(e_i,e_i)=0$. Now $\Sigma$ is an analytic manifold since it is a  minimal hypersurface of an analytic manifold. By the unique extension theorem, any harmonic function is uniquely determined by all its derivatives at one point $p$. We prove that if a harmonic function $\phi$ satisfies the extra condition that $\Hess \phi$ commutes with the shape operator $S$, all the covariant derivatives $\nabla^j \phi (p)$ are uniquely determined by $\phi(p),\nabla_{e_1}\phi (p),\ldots, \nabla_{e_{n-1}}\phi(p), \nabla^2_{e_1,e_1}\phi (p),\ldots,\nabla^2_{e_{n-2},e_{n-2}}\phi (p)$, so the dimension of all such functions is at most $2(n-1)$.

Let us prove that if $\Delta\phi=0$ and $\phi(p)=\nabla_{e_1}\phi(p)=\ldots=\nabla_{e_{n-1}}\phi(0)=\nabla_{e_1,e_1}\phi(p)=\ldots,\nabla_{e_{n-2},e_{n-2}}\phi(p)=0$ then all derivatives $\nabla^j_{e_{i_1},\ldots,e_{i_j}}\phi(p)=0$. We'll proceed by induction on $j$. The cases of $j\le 2$ are given as assumptions. Now suppose $j>2$, and that any covariant derivatives of $\phi$ with order less than or equal to $j-1$ are zero. Consider a covariant derivative $\nabla^j_{e_{i_1},e_{i_2},\ldots,e_{i_j}}\phi$. We separate two cases.

\begin{itemize}
    \item[Case 1] Not all of $i_1,\ldots,i_j$'s are equal. Then after switching the order of taking derivatives finitely many times, we will get an expression of $\nabla^j_{e_{i_1'},\ldots,e_{i_j'}}$ with $i_{j-1}'\ne i_j'$. Every time we switch two consecutive indices $i_\alpha,i_{\alpha+1}$, the difference we get is a curvature term depending linearly on lower order derivatives of $\phi$ at $p$. By assumption all lower order derivatives of $\phi$ at $p$ are zero. On the other hand, since $i_{j-1}'\ne i_{j}'$, $\nabla^2_{e_{i_{j-1}'},e_{i_{j}'}}\phi=0$. Therefore, in this case, $\nabla^j_{e_{i_1},e_{i_2},\ldots,e_{i_j}}\phi=0$.

    \item[Case 2] $i_1=i_2=\ldots=i_j$ are all equal. Without loss of generality we may assume $i_1=1$. Since $\Delta\phi=0$, $\nabla^2_{e_1,e_1}\phi=-\sum_{i=2}^n \nabla^2_{e_i,e_i}\phi$. Therefore $\nabla^j_{e_1,\ldots,e_1}\phi=-\nabla^j_{e_1,\ldots,e_1,e_2,e_2}\phi-\ldots-\nabla^j_{e_1,\ldots,e_1,e_j,e_j}\phi$. From case 1 we know $\nabla^j_{e_1,\ldots,e_1}\phi=0$.

\end{itemize}

\end{proof}

\begin{rema}
	Note that by writing $\omega=d\phi$, one increases the dimension of the space of harmonic $1$-forms by one($\phi$ and $\phi+C$ gives the same $\omega$, for every constant $C$). Therefore we conclude that the space of harmonic $1$-forms $\omega$ satisfying
	\[\bangle{\nabla_{S(V_i)}\omega, V_j}=\bangle{\nabla_{S(V_j)}\omega,V_i},\quad \forall 1\le i\le j\le n\]
	is at most $2n-3$, under the assumption of previous proposition. When $n=3$, this fact has also been utilized by A. Ros, see \cite{ros2006one}. We generalize it for all dimensions.
\end{rema}

The next geometric theorem shows the assumptions of the previous proposition holds for general minimal hypersurfaces in $\RR^4$.

\begin{theo}\label{rigidity}
Suppose $\Sigma^3\subset \RR^4$ is a connected complete minimal hypersurface, with the property that at each point there are two equal principal curvatures. Then $\Sigma$ is either a hyperplane or a catenoid.
\end{theo}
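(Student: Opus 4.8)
The plan is to use minimality to pin down the principal curvatures, run a Codazzi analysis on the umbilic-free part to show that $\Sigma$ is locally a rotation hypersurface, and then use minimality together with completeness and analyticity to identify it with a catenoid. First I would record the algebraic consequence of minimality: since $\lambda_1+\lambda_2+\lambda_3=0$ and the hypothesis forces two of the $\lambda_i$ to coincide at every point, the principal curvatures are $\{a,a,-2a\}$ for a function $a\ge 0$, and $|A|^2=6a^2$. Hence a point is umbilic exactly when $a=0$, i.e.\ when $A=0$. Because a minimal hypersurface is real-analytic, $|A|^2$ is analytic, so either $A\equiv 0$ — in which case $\Sigma$ is totally geodesic and, being complete and connected, a hyperplane — or $A\not\equiv 0$ and the umbilic-free set $U=\{A\ne 0\}$ is open and dense. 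On $U$ the curvatures $a$ (multiplicity $2$) and $-2a$ (multiplicity $1$) are distinct of constant multiplicity, so there is a smooth principal frame $e_1,e_2\in E_a$, $e_3\in E_{-2a}$.

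Next, on $U$ I would extract the geometry from the Codazzi equations $(\nabla_X S)Y=(\nabla_Y S)X$. Testing against frame pairs yields, in order: $e_1(a)=e_2(a)=0$, so $a$ is constant along the leaves of $E_a$; $\langle[e_1,e_2],e_3\rangle=0$, so $E_a$ is integrable; $\langle\nabla_{e_i}e_j,e_3\rangle=\tfrac{e_3(a)}{3a}\,\delta_{ij}$ for $i,j\in\{1,2\}$, so the leaves of $E_a$ are totally umbilic in $\Sigma$; and $\langle\nabla_{e_3}e_3,e_1\rangle=\langle\nabla_{e_3}e_3,e_2\rangle=0$, so the integral curves of $e_3$ are geodesics of $\Sigma$. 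Since $S$ restricted to $E_a$ equals $a\cdot\mathrm{Id}$, each leaf is also umbilic in the direction $\nu$, hence totally umbilic in $\RR^4$; as $a\ne 0$ its mean curvature vector is nonzero, so each leaf is an open piece of a round $2$-sphere. Thus $U$ is foliated by round $2$-spheres met orthogonally by a field of geodesic meridians.

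I would then upgrade this local picture to genuine rotational symmetry: the affine $3$-planes spanned by the spherical leaves are parallel, their common normal giving the axis direction, the centers of the spheres lie on a single line, and $\Sigma|_U$ is invariant under the $SO(3)$ fixing that axis — that is, a rotation hypersurface with profile a curve in a half-plane. Imposing minimality reduces the mean-curvature equation to the classical catenary ODE for the profile (equivalently, one invokes the do Carmo--Dajczer classification of minimal rotation hypersurfaces), whose complete nonplanar solutions are exactly the catenoids. Finally, analyticity propagates this identification from $U$ to all of the connected surface; since the catenoid in $\RR^4$ has $|A|>0$ everywhere one concludes $U=\Sigma$, and completeness forces $\Sigma$ to be the entire catenoid.

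The main obstacle is the passage from the local differential data to honest rotational symmetry: one must show the umbilic spheres are concentric about a single fixed axis, rather than merely that $\Sigma|_U$ is a local warped product. This is precisely where one tracks how the centers and radii of the leaves evolve along the meridians and checks that the axis direction is parallel. A secondary technical point is the globalization across the analytic umbilic set in the case $A\not\equiv 0$, which is handled by unique continuation for the analytic minimal surface together with the completeness of $\Sigma$.
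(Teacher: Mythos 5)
Your proposal is correct, and its core is the same as the paper's: your Codazzi analysis on the umbilic-free set is exactly the paper's moving-frame computation (done there via $\bar{\nabla}_{[e_i,e_j]}N$ in flat $\RR^4$), yielding $e_1(\lambda)=e_2(\lambda)=0$, integrability of $\mathrm{Span}\{e_1,e_2\}$, umbilicity of the leaves and geodesic meridians, and the conclusion that each leaf is a piece of a round $2$-sphere (phrased in the paper as: $X-\tfrac{1}{\lambda}N$ and $X-\tfrac{1}{\alpha}e_3$ are constant along a leaf, so the leaf lies in the intersection of two $3$-spheres, or of a $3$-sphere and a hyperplane). The two arguments diverge only at the endgame. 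What you flag as the main obstacle---showing the umbilic spheres are concentric about a fixed axis, so that the local warped-product structure is honest $SO(3)$ symmetry---is precisely the step the paper does not do by hand: it cites a theorem of Jagy, that a connected minimal hypersurface of $\RR^4$ containing an open set foliated by round spheres is a rotation hypersurface, and that citation also absorbs your secondary globalization issue (connectedness suffices; no separate unique-continuation argument over the umbilic set is needed). Your plan of tracking centers and plane normals along the meridians would work, but as written it remains a plan rather than a computation; to make the argument complete you should either carry out that computation or cite Jagy, or alternatively cite do Carmo--Dajczer, whose paper (as the paper's own remark acknowledges) already contains the full statement of this theorem. On the other side of the ledger, your dichotomy via analyticity of $|A|^2$ (totally geodesic versus dense umbilic-free set) is slightly more careful than the paper's, which simply passes to an open set where $\lambda\neq 0$, and your reduction to the catenary ODE once rotational symmetry is in hand is standard.
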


\begin{proof}
If the principal curvature at every point is $0$, then $\Sigma$ is a hyperplane. We assume that there is an open subset $U$ of $\Sigma$ such that principal curvatures of $\Sigma$ in $U$ are given by $\lambda,\lambda,-2\lambda$ for some nonzero $\lambda$. Denote $\bar{\nabla}$ the connection in $\RR^4$, and $\nabla$ the connection on $\Sigma$. Choose an orthonormal frame $\{e_1,e_2,e_3\}$ locally in $U$, and let $N$ be its unit normal vector in $\RR^4$, such that $\bar{\nabla}_{e_1}N=\lambda e_1,\bar{\nabla}_{e_2}N=\lambda e_2,\bar{\nabla}_{e_3}N=-2\lambda e_3$.

We first prove that $\Span\{e_1,e_2\}$ is an integrable distribution. For this, let's show $[e_1,e_2]$ is also a principal vector with curvature $\lambda$.

By the Gauss equation, we have
\begin{equation}\label{eq1}
\begin{split}
\bar{\nabla}_{[e_1,e_2]} N &= \bar{\nabla}_{e_1}\bar{\nabla}_{e_2}N - \bar{\nabla}_{e_2}\bar{\nabla}_{e_1} N\\
                       &= e_1(\lambda)e_2-e_2(\lambda)e_1+\lambda [e_1,e_2].
\end{split}
\end{equation}

Suppose $[e_1,e_2]=a_1e_1+a_2e_2+a_3e_3$. Then we have $\bar{\nabla}_{[e_1,e_2]}N=\sum a_i\bar{\nabla}_{e_i}N=a_1\lambda e_1+a_2\lambda e_2-2a_3\lambda e_3$. On the other hand, by (\ref{eq1}), we have $\bar{\nabla}_{[e_1,e_2]}N=\lambda (a_1 e_1+a_2e_2+a_3e_3)+e_1(\lambda)e_2-e_2(\lambda)e_1$. Therefore we see that $a_3=0$ and $e_1(\lambda)=e_2(\lambda)=0$.

Denote by $\Gamma$ to be the integral submanifold of the distribution spanned by $\{e_1,e_2\}$. From the above we also see that $\lambda$ is constant along $\Gamma$. We next prove that $\Gamma$ is part of a sphere.

To see this, we first note that $\bar{\nabla}_{e_1}e_3$ has no component in $e_3$ and $N$, and $\bar{\nabla}_{e_3}e_1$ has no component in $e_1$ and $N$. Therefore we may assume
\begin{align}
\bar{\nabla}_{e_1}e_3&=ae_1+be_2\\
\bar{\nabla}_{e_3}e_1&=ce_2+de_3.
\end{align}
Then by the Gauss equation,
\begin{equation*}
\begin{split}
\bar{\nabla}_{[e_1,e_3]}N&=\bar{\nabla}_{e_1}\bar{\nabla}_{e_3}N-\bar{\nabla}_{e_3}\bar{\nabla}_{e_1}N\\
                   &=-2\lambda\bar{\nabla}_{e_1}e_3-e_3(\lambda)e_1-\lambda\bar{\nabla}_{e_3}e_1\\
                   &=-2\lambda(ae_1+be_2)-e_3(\lambda)e_1-\lambda(ce_2+de_3).
\end{split}
\end{equation*}
On the other hand, we have
\begin{equation*}
\begin{split}
\bar{\nabla}_{[e_1,e_3]}N&=\bar{\nabla}_{\bar{\nabla}_{e_1}e_3-\bar{\nabla}_{e_3}e_1}N\\
                   &=a\lambda e_1+b\lambda e_2-c\lambda e_2+2d\lambda e_3.
\end{split}
\end{equation*}
Comparing coefficients, we obtain $b=d=0, a=-\frac{e_3(\lambda)}{3\lambda}$. That is,
\[\bar{\nabla}_{e_1}e_3=-\frac{e_3(\lambda)}{3\lambda}e_1,\quad \bangle{\bar{\nabla}_{e_3}e_1,e_3}=0.\]
For similar reasons we also have
\[\bar{\nabla}_{e_2}e_3=-\frac{e_3(\lambda)}{3\lambda}e_2,\quad \bangle{\bar{\nabla}_{e_3}e_2,e_3}=0.\]

Let $\alpha=-\frac{e_3(\lambda)}{\lambda}$. Then we calculate $\bar{\nabla}_{[e_1,e_2]}e_3$. Again by the Gauss equation, we see $\bar{\nabla}_{[e_1,e_2]}e_3=\alpha [e_1,e_2]+e_1(\alpha)e_2-e_2(\alpha)e_1$. However since $[e_1,e_2]$ is in $\Span\{e_1,e_2\}$, we have $\bar{\nabla}_{[e_1,e_2]}e_3=\alpha[e_1,e_2]$. Therefore $e_1(\alpha)=e_2(\alpha)=0$, so that $\alpha$ is constant along $\Gamma$.

We now have $\bar{\nabla}_{e_i}N=\lambda e_i$ and $\bar{\nabla}_{e_i}e_3=\alpha e_i$ for some constant $\lambda,\alpha$ along $\Gamma$. Viewing $\Gamma$ as a vector valued function $X$, we see that $X-\frac{1}{\lambda}N$ and $X-\frac{1}{\alpha}e_3$ are both constant vectors (when $\alpha=0$ the second conclusion is $X$ lies on a plane). Hence $X$ lies on the intersection of two 3-spheres (when $\alpha\ne 0$), or the intersection of a 3-sphere and a hyperplane (when $\alpha=0$). In either case, $\Gamma$ is a part of a 2-sphere.

The above proves a foliation structure of $\Sigma$ by spheres $\Gamma$. By a result of Jagy (corollary of section 4 in \cite{jagy1991}), a connected minimal hypersurface of $\RR^4$ with an open set foliated by spheres possesses $SO(3)$ symmetry. Hence $\Sigma$ is a $3$-dimensional catenoid.
\end{proof}

\begin{rema}
The same proof directly gives the higher dimensional analogue of theorem \ref{rigidity}. Namely, if $\Sigma^{n-1}$ is a connected minimal hypersurface in $\RR^n$ with the property that at every point on $\Sigma$ there is a principal curvature with multiplicity $n-2$, then $\Sigma$ is either a hyperplane or a higher dimensional catenoid.
\end{rema}

\begin{rema}
	After completing this work, the author is informed by the referee that theorem \ref{rigidity} was known to do Carmo and Dajczer, see \cite{do1983rotation}. Here we provide a different proof of the same statement.
\end{rema}

To finish the proof of theorem \ref{precise}, recall that $l=\#\textrm{ends}+b_1(\bar{\Sigma})-1$ and that $l-\frac{n(n-1)}{2}\Index (\Sigma)$ is bounded by the dimensional of harmonic one forms $\omega$ satisfying 
\[\bangle{\nabla_{S(V_i)}\omega,V_j}=\bangle{\nabla_{S(V_j)}\omega,V_i},\quad \forall 1\le i\le j\le n.\]
 For $\Sigma^{n-1}\subset \RR^n$, if there is a point on $\Sigma$ where all the principal curvatures are distinct, then we may apply the above theorems to conclude that the dimension of such harmonic one forms is bounded by $2n-3$, and theorem \ref{precise} follows. When $n=4$, we either have that $\Sigma$ is a hyperplane, or the $3$-dimensional catenoid, or that $\Sigma$ has a point with $3$ distinct principal curvatures. In the first case $l=0$ and $\Index(\Sigma)=0$ (the hyperplane is stable). In the second case $l=1$ and $\Index(\Sigma)=1$ (\cite{tam2009stability}). In the third case, $l-6\Index(\Sigma)\le 5$, therefore $\Index(\Sigma) \ge \frac{1}{6}(\#\textrm{ends}+b_1(\Sigma))-1$.

\section{The space of index $1$ minimal hypersurfaces in $\RR^4$}
In this section we consider the space of index $1$ minimal hypersurfaces in $\RR^4$ with Euclidean volume growth. For minimal hypersurfaces with Euclidean volume growth in $\RR^n$, $4\le n\le 7$, finite index is equivalent to finite total curvature. Moreover, theorem \ref{precise} implies a control of the volume growth rate in terms of index. Consider the set
\begin{equation*}
    \begin{split}
        \cS=\{&\Sigma^3\subset \RR^4: \Sigma \textrm{ is a complete connected embedded  oriented minimal hypersurface}\\
              &\textrm{ with index $1$ and Euclidean volume growth, } |A_\Sigma|(0)=\max|A_\Sigma|=1.\}
    \end{split}
\end{equation*}
Then the volume growth rate of every surface in $\cS$ is uniformly bounded. That is, for every $\Sigma\in \cS$, $R>0$,
\[\frac{\Vol(\Sigma\cap B_R(0))}{\omega_3 R^3}\le \eta.\]
For example, we may take $\eta=15$.

Let us show that the space $\cS$ is compact in the smooth topology. Take a sequence $\Sigma_j$ in $\cS$. We first observe that up to a subsequence (which we also denote by $\Sigma_j$), there are two modes of convergence. The first is by the fact that the curvature of $\Sigma_j$ is uniformly bounded. Therefore by Arzela-Ascoli, there is a subsequence converging locally graphically in $C^{1,\alpha}$ to some $\Sigma$. From standard minimal surface theory, this also implies the convergence is locally smooth. The second mode of convergence is that, since we have a uniform density bound, the varifolds determined by $\Sigma_j$ have uniformly bounded local mass. By Allard's compactness theorem, a subsequence converges as varifolds to some $\Sigma'$. By the constancy theorem, $\Sigma'$ is supported on $\Sigma$. As a result, we get that $\Sigma_j$ converges to $\Sigma$ both locally smoothly and in the varifold sense. Now the varifold convergence implies that the second variation of $\Sigma_j$ converges to $\Sigma$. In particular, the index of $\Sigma$ cannot be larger than $1$ (otherwise for large $j$, there will be at least two negative eigenfunctions for the Jacobi operator on $\Sigma_j$). However, from smooth convergence we know $|A_\Sigma|(0)=|A_{\Sigma_j}|(0)=1$, hence by \cite{shen1998stable}, $\Sigma$ cannot be stable. Therefore we conclude that $\Sigma$ has index $1$.

It remains to prove that $\Sigma$ is connected. The argument we use here is similar to \cite{chodosh2015minimal}. The following observation of White asserts that rapid curvature decay implies simple topology, namely
\begin{prop}[\cite{white1987curvature}]\label{brian}
Let $\Sigma^{n-1}$ be a minimal hypersurface in Euclidean space. Assume for all $x\in B_R(0)^c$, $|A_\Sigma|(x)\cdot |x|\le \frac{1}{4}$, and $\Sigma$ intersects $\partial B_R(0)$ transversely on $k$ connected components, each one diffeomorphic to $S^{n-2}$. Then each component of $\Sigma-B_R(0)$ is diffeomorphic to $S^{n-2}\times [0,1)$.
\end{prop}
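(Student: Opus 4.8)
The plan is to use the squared distance $\rho=\tfrac12|x|^2$ as a Morse function on the exterior region, exploiting the fact that the curvature decay hypothesis makes $\rho$ uniformly convex there. First I would record the standard computation for the position vector $x$ on a minimal hypersurface: writing $x^T$ for the tangential part of $x$ and $S$ for the shape operator with $A(X,Y)=\bangle{S(X),Y}$, one has $\nabla^\Sigma\rho=x^T$ and, for tangent vectors $X,Y$,
\[\Hess_\Sigma\rho(X,Y)=\bangle{X,Y}+\bangle{x,\nu}A(X,Y).\]
Thus the eigenvalues of $\Hess_\Sigma\rho$ are $1+\bangle{x,\nu}\lambda_i$, where the $\lambda_i$ are the principal curvatures. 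Since $|\bangle{x,\nu}|\le|x|$ and $|\lambda_i|\le|A|\le\tfrac{1}{4|x|}$ on $B_R(0)^c$ by hypothesis, each eigenvalue lies in $[\tfrac34,\tfrac54]$. Hence $\Hess_\Sigma\rho\ge\tfrac34 g$ is positive definite on $\{|x|\ge R\}\cap\Sigma$ (its trace being $\Delta_\Sigma\rho=n-1>0$, consistent with minimality).

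The key consequence is that every critical point of $r=|x|$ (equivalently of $\rho$) lying in $\{|x|>R\}\cap\Sigma$ is nondegenerate of Morse index $0$, i.e.\ a strict local minimum. I would then argue that no such interior critical point can occur on a fixed connected component $E$ of $\Sigma\setminus B_R(0)$. Its boundary $\partial E$ is a single copy of $S^{n-2}$ on which $r\equiv R$, and the transversality hypothesis gives $\nabla^\Sigma r\ne0$ along $\partial E$. Suppose $E$ contained an interior critical point and let $c>R$ be the smallest critical value; this is finite since the critical points are nondegenerate, hence isolated, and $r|_\Sigma$ is proper (which holds in our setting of complete, properly embedded hypersurfaces with Euclidean volume growth). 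For $R\le s<c$ the sublevel region $\{r\le s\}\cap E$ is a connected collar $\partial E\times[R,s]$ produced by the gradient flow of $r$; as $s$ crosses $c$ we attach one or more $0$-handles, so $\{r\le s\}\cap E$ becomes disconnected. Because \emph{every} critical point has index $0$, no $1$-handle is ever attached, so once two components separate they can never be rejoined; hence $E=\bigcup_s\{r\le s\}$ would have at least two components, contradicting the connectedness of $E$.

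Therefore $r|_E$ has no critical points at all, so $\nabla^\Sigma r\ne0$ throughout $E$, and the normalized gradient flow of $r$ furnishes a diffeomorphism $E\cong\partial E\times[R,\infty)\cong S^{n-2}\times[0,1)$, as claimed. I expect the main obstacle to be precisely the step ruling out interior critical points: the convexity computation only shows that any such point would be a local minimum, and converting ``all critical points are local minima'' into ``there are none'' genuinely requires the global connectedness of $E$ together with careful Morse theory on a manifold with boundary — including the verification that $r|_\Sigma$ is proper on the end, so that the handle-attachment bookkeeping is valid.
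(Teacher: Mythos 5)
Your proof is correct and is essentially the paper's own argument: the paper's proof is a two-line sketch asserting that under the curvature hypothesis $|x|^2$ is a Morse function with no critical point in $\Sigma - B_R(0)$, so that Morse theory yields the collar structure, and your Hessian computation $\Hess_\Sigma \rho = g + \bangle{x,\nu}A$ together with the index-$0$/no-merging argument supplies exactly the details behind that assertion (including the properness caveat, which the paper leaves implicit). The only slip is that you assume at the outset that $\partial E$ is a single copy of $S^{n-2}$, but this is harmless: your own observation that components of sublevel sets of $r|_E$ can never merge in the absence of index-$1$ critical points shows every sublevel set is connected, which simultaneously rules out interior critical points and rules out a component having more than one boundary sphere.
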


We briefly mention the proof of this proposition. Under the curvature condition, $|x|^2$ is a Morse function with no critical point in $\Sigma-B_R(0)$. Therefore by Morse theory each connected component of it is diffeomorphic to $S^{n-2}\times [0,1)$.

Now go back to the proof of our compactness theorem. Each $\Sigma_j$ and $\Sigma$ have finite index and Euclidean volume growth, hence they are regular at infinity. So there are constants $R_j$ such that $\Sigma_j$ intersects $\partial B_{R_j}(0)$ transversely and $|A_{\Sigma_j}|\cdot |x|\le \frac{1}{4}$, for $x\in \Sigma-B_{R_j}(0)$. Assume also this $R_j$ is the least possible choice.
\begin{claim}
$\{R_j\}$ is bounded.
\end{claim}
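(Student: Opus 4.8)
The plan is to argue by contradiction: suppose that, after passing to a subsequence, $R_j\to\infty$. By minimality of the choice of $R_j$ and the fact that each $\Sigma_j$ is regular at infinity (so $\sup_{|x|=r}|A_{\Sigma_j}|(x)\,r\to 0$ as $r\to\infty$), the curvature bound is exactly saturated at radius $R_j$: there is a point $y_j\in\Sigma_j$ with $|y_j|=R_j$ and $|A_{\Sigma_j}|(y_j)\,|y_j|=\tfrac14$, i.e. $|A_{\Sigma_j}|(y_j)=\tfrac{1}{4R_j}$. Rescale by setting $\tilde\Sigma_j=R_j^{-1}\Sigma_j$. Since the index, the density ratio $\Vol(\Sigma\cap B_R)/(\omega_3R^3)$, and the scale-invariant quantity $|A|\,|x|$ are all unchanged under homothety, each $\tilde\Sigma_j$ has index $1$, obeys the same uniform volume bound $\eta$, satisfies $|A_{\tilde\Sigma_j}|(x)\,|x|\le\tfrac14$ for $|x|\ge 1$, and carries a point $\tilde y_j\in\partial B_1(0)$ with $|A_{\tilde\Sigma_j}|(\tilde y_j)=\tfrac14$. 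Thus in the rescaled picture all curvature concentration is pushed into the origin, where $|A_{\tilde\Sigma_j}|(0)=R_j\to\infty$, while outside $B_1(0)$ the surfaces are uniformly mildly curved and remain non-flat right up to $\partial B_1(0)$.

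The crux is to pin the single unit of index at the origin scale. Because $\Sigma_j\to\Sigma$ smoothly near $0$ and $\Sigma$ has index $1$, there is $v\in C_c^\infty(\Sigma)$ supported in a fixed ball $B_\rho(0)$ with $Q_\Sigma(v,v)<0$; transplanting via the local smooth convergence gives $v_j$ on $\Sigma_j$ with $Q_{\Sigma_j}(v_j,v_j)<0$ for $j$ large, and after rescaling $v_j$ is supported in $B_{\rho/R_j}(0)\subset B_\epsilon(0)$. Consequently, for every fixed $\epsilon>0$ and all large $j$, the surface $\tilde\Sigma_j$ is stable on $\{|x|>\epsilon\}$: any compactly supported $u$ there with $Q(u,u)<0$ would have support disjoint from that of $v_j$, so $\mathrm{span}\{v_j,u\}$ would be a two-dimensional subspace on which $Q$ is negative definite, forcing $\ind(\tilde\Sigma_j)\ge 2$, contrary to index $1$. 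I expect this localization step, together with the removable-singularity step below, to be the main obstacle, since it is where the normalization $|A_{\Sigma_j}|(0)=\max|A_{\Sigma_j}|=1$ is genuinely used to guarantee that the index does not escape to infinity.

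Stability then upgrades the convergence. On $\{|x|>2\epsilon\}$ the curvature estimates for stable minimal hypersurfaces (valid here because $n=4\le 7$ and we have uniform local area bounds from the density bound) yield $|A_{\tilde\Sigma_j}|\le C(\epsilon)$, so a diagonal subsequence converges smoothly on $\RR^4\setminus\{0\}$ to a stable minimal hypersurface $\Sigma_\infty$. Smooth convergence gives $\tilde y_j\to\tilde y_\infty\in\partial B_1(0)$ with $|A_{\Sigma_\infty}|(\tilde y_\infty)=\tfrac14$, so $\Sigma_\infty$ is not flat. The origin is an isolated point singularity of finite density, hence removable, and $\Sigma_\infty$ extends to a complete stable minimal hypersurface in $\RR^4$ with Euclidean volume growth. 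By the equivalence already invoked in this section (Euclidean volume growth plus finite index implies finite total curvature) together with the Bernstein-type theorem of Shen--Zhu \cite{shen1998stable}, $\Sigma_\infty$ must be a hyperplane, so $|A_{\Sigma_\infty}|\equiv 0$, contradicting $|A_{\Sigma_\infty}|(\tilde y_\infty)=\tfrac14$. Therefore $\{R_j\}$ is bounded.
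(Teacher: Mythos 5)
Your argument is correct, and it shares the paper's skeleton --- argue by contradiction, rescale by $R_j$, show the blow-down limit must be flat, and contradict the saturated curvature point on $\partial B_1(0)$ --- but the mechanism by which you force flatness is genuinely different. The paper takes the varifold limit $\bar\Sigma$ of the rescaled surfaces, invokes the bounded-index compactness theory to say $\bar\Sigma$ is regular and the convergence is smooth away from at most one point (necessarily the origin, where $|A_{\tilde\Sigma_j}|(0)=R_j\to\infty$), uses Allard's theorem to conclude the convergence at $0$ has multiplicity at least two, and then gets stability of $\bar\Sigma$ from the positive Jacobi field built out of the distance between two sheets (the argument of \cite{bensharp2015compact}), extended over the puncture via the uniform total curvature bound. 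You instead localize the index: the negative direction of $\Sigma$ lives in a fixed ball at the original scale, so its transplant to $\Sigma_j$ has support shrinking into $B_\epsilon(0)$ after rescaling, and index one then forces $\tilde\Sigma_j$ to be stable on $\{|x|>\epsilon\}$ for large $j$ (disjoint supports would give two independent negative directions); stability plus the uniform area bounds yields Schoen--Simon--Yau curvature estimates on $\{|x|>2\epsilon\}$, hence smooth diagonal subconvergence on $\RR^4\setminus\{0\}$ to a stable, non-flat punctured limit, which a removable singularity argument completes to a complete stable hypersurface, hence a plane. Your route avoids the multiplicity dichotomy and the sheet-separation Jacobi field entirely, and it makes explicit where the normalization $|A_{\Sigma_j}|(0)=1$ and the index-one hypothesis actually enter, which the paper leaves inside its citation; the paper's route, in exchange, gets smooth convergence away from $0$ for free from the compactness theory and never needs curvature estimates in the intermediate annulus.

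The one step you assert rather than prove is the removable singularity: you need the closure of $\Sigma_\infty$ to be stationary across $0$ (a cutoff argument using the density bound, valid since the hypersurface has dimension $3\ge 2$), stability to extend across the puncture (a point has zero $W^{1,2}$-capacity in dimension three), and Schoen--Simon regularity to conclude smoothness at $0$. This package is standard and is the same order of black-boxing as the paper's appeal to \cite{bensharp2015compact}, but you should cite it explicitly (it appears in essentially this form in \cite{chodosh2015minimal} and \cite{bensharp2015compact}) rather than state it as self-evident.
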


Assuming the claim, the connectedness of $\Sigma$ follows. Indeed, suppose $R_j<R$. Then for each $j$, by Proposition \ref{brian}, $\Sigma_j\cap B_R(0)$ is connected. Now the varifold convergence of $\Sigma_j\rightarrow \Sigma$ implies Hausdorff convergence in compact set. Therefore $\Sigma\cap B_R(0)$ is connected. From this and Proposition \ref{brian} we see that $\Sigma$ is connected.

Let us now prove the claim. Suppose the contrary. Then by taking a further subsequence (which we still denote by $\Sigma_j$), $R_j\rightarrow \infty$. Consider the rescaled sequence $\bar{\Sigma}_j=\frac{1}{R_j}\Sigma$. The sequence $\bar{\Sigma}_j$ has the same density at infinity as $\Sigma_j$, hence by Allard's compactness theorem they converge, up to a subsequence, to some varifold $\bar{\Sigma}$. By the choice of $R_j$ we see that the curvature estimate $|A_{\bar{\Sigma}_j}|\cdot |x|\le \frac{1}{4}$ holds for $x\in \bar{\Sigma}_j-B_1(0)$. By Proposition \ref{brian}, each $\bar{\Sigma}_j\cap B_1(0)$ is connected. Therefore $\bar{\Sigma}$ is connected. Now the curvature of $\Sigma_j'$ blows up at $0$, so the convergence cannot be smooth at $\{0\}$. Since $\bar{\Sigma}_j$ has index $1$, the surface $\bar{\Sigma}$ is regular everywhere, and the convergence is not smooth at no more than $1$ point. So $\{0\}$ is the unique point where the convergence $\bar{\Sigma}_j\rightarrow \bar{\Sigma}$ is not smooth. By Allard's theorem the convergence cannot be of multiplicity $1$. Note also that by \cite{tysk1989finiteness}, the total curvature of each $\Sigma_j$ is also uniformly bounded. Now that the convergence is at least 2-sheeted, we may use an argument in \cite{bensharp2015compact} to produce a positive Jacobi field on $\bar{\Sigma}-\{0\}$ by taking the distance between two sheets then normalized properly. Then the uniform bound of total curvature of $\Sigma_j$ implies that, this Jacobi field can be extended over $\{0\}$. We refer the readers to \cite{bensharp2015compact} for a detailed explanation. This means that $\bar{\Sigma}$ is stable. Therefore $\bar{\Sigma}$ is a plane through $0$.

By the choice of $R_j$ there exists some $x_j\in \bar{\Sigma}_j \cap \partial B_1(0)$ such that $|A_{\bar{\Sigma}_j}|(x_j)=\frac{1}{4}$. Taking a subsequence of $x_j$ converging to some $x\in \bar{\Sigma}\cap \partial B_1(0)$, we get a contradiction, since $|A_{\bar{\Sigma}}(x)|=0$, and $\bar{\Sigma}_j\rightarrow \bar{\Sigma}$ smoothly near $x$. The claim is proved.

Theorem \ref{compactnessofindex1} roughly says that an index $1$ minimal hypersurface in $\RR^4$ cannot have two necks that are far away, in constrast to the phenomenon described in Example \ref{costa}. This, together with the following corollary, can be viewed as evidence that the $3$ dimensional catenoid is the unique embedded index $1$ minimal hypersurface with Euclidean volume growth in $\RR^4$.

\begin{coro}
There exists a constant $R$ such that for any minimal hypersurface $\Sigma^3$ in $\RR^4$ with index $1$ and Euclidean volume growth, such that $|A_\Sigma|(0)=\max |A_\Sigma|=1$, we have that $\Sigma-B_R(0)$ is the union of minimal graphs.
\end{coro}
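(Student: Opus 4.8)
The plan is to prove the statement as the uniform version of Tysk's regular-at-infinity structure. Since $\Sigma$ has index $1$ and Euclidean volume growth in $\RR^4$, it has finite total curvature by \cite{tysk1989finiteness}, hence is regular at infinity: outside a compact set each component of $\Sigma$ is already a minimal graph of a function $u$ over a hyperplane, with $|x||u|+|x|^2|\nabla u|+|x|^3|\nabla^2u|\le C(\Sigma)$. Thus per-surface graphicality is automatic; the only content of the corollary is that the radius beyond which this holds can be chosen independently of $\Sigma\in\cS$. First I would record two uniform ingredients from the proof of Theorem \ref{compactnessofindex1}: the boundedness of $\{R_j\}$ established there gives a single $R_0<\infty$ with $|A_\Sigma|(x)\,|x|\le \tfrac14$ for every $\Sigma\in\cS$ and every $|x|>R_0$, and with $\Sigma$ meeting $\partial B_{R_0}(0)$ transversely; by Proposition \ref{brian} each component of $\Sigma-B_{R_0}(0)$ is then an annular end diffeomorphic to $S^2\times[0,1)$, and by Theorem \ref{precise} (with $\ind(\Sigma)=1$, forcing $\#\textrm{ends}+b_1\le 12$) the number of these ends is uniformly bounded.

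I would then argue by contradiction and compactness. If no uniform $R$ exists, pick $\Sigma_j\in\cS$ with $\Sigma_j-B_j(0)$ not a union of minimal graphs; by Theorem \ref{compactnessofindex1} pass to a subsequence with $\Sigma_j\to\Sigma_\infty\in\cS$ smoothly on compact sets, where $\Sigma_\infty$ is graphical outside some $B_{R_\infty}(0)$. Taking $R_0\ge R_\infty$, smooth convergence on the collar $\{R_0\le|x|\le 2R_0\}$ pins down, for large $j$, the number of sheets of $\Sigma_j$, their asymptotic hyperplanes, and the value of the unit normal near $\partial B_{R_0}(0)$. The remaining task is to continue each annular end of $\Sigma_j$ as a graph over its hyperplane for all $|x|>R_0$, with the same radius for all $j$. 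For this I would invoke Tysk's regular-at-infinity estimate \cite{tysk1989finiteness}, whose decay constant is controlled by the total curvature $\int_\Sigma|A|^{n-1}$: since finite index (here index $1$) and Euclidean volume growth yield a uniform total curvature bound over $\cS$, the gradient bound $|\nabla u|\le C|x|^{-2}$ holds with a uniform $C$, so the projection onto the hyperplane stays a diffeomorphism beyond a single radius $R\ge R_0$. Finally, for $j$ with $B_j\supset B_R$, the restriction of a union of graphs to the smaller complement $\Sigma_j-B_j(0)$ is again a union of graphs, contradicting the choice of $\Sigma_j$.

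The hard part will be the uniformity of the decay rate, not the pointwise threshold. The bound $|A|(x)\,|x|\le\tfrac14$ controls only the topology of the ends: along a radial curve it gives $\int|A|\,ds$ bounded by $\int r^{-1}\,dr$, which diverges logarithmically, so it cannot by itself keep the unit normal close to a fixed direction and thus cannot exclude a slowly drifting, non-graphical end. The genuine input is the faster decay $|A|(x)\le C|x|^{-(n-1)}$ from finite total curvature, for which $\int_R^\infty|A|\,ds\to 0$; the real obstacle is to make the constant $C$ uniform over $\cS$, equivalently to rule out a fixed amount of total curvature escaping to spatial infinity along the sequence. I expect to settle this by combining the uniform total curvature bound with scale-invariant $\epsilon$-regularity, which yields $|A|(x)\le C|x|^{-1}\bigl(\int_{\Sigma\cap\{|y|>|x|/2\}}|A|^{n-1}\bigr)^{1/(n-1)}$, and by showing the tail $\sup_{\Sigma\in\cS}\int_{\Sigma\cap\{|x|>\rho\}}|A|^{n-1}\to 0$ as $\rho\to\infty$. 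This last no-concentration statement is the crux, and I would deduce it from the full convergence of total curvature under the simultaneous smooth and varifold convergence provided by Theorem \ref{compactnessofindex1}: any curvature lost to infinity along $\Sigma_j$ would contradict that convergence together with the uniform bound, forcing the tails to be uniformly small and closing the argument.
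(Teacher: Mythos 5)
Your setup is sound, and much of it matches what the paper's compactness argument provides: the Claim in the proof of Theorem \ref{compactnessofindex1} never uses the convergence $\Sigma_j\to\Sigma$, so it does yield a single $R_0$ with $|A_\Sigma(x)|\,|x|\le\tfrac14$ outside $B_{R_0}(0)$ for every $\Sigma\in\cS$; Proposition \ref{brian} then makes the ends annuli, and Theorem \ref{precise} bounds their number. You are also right that this decay alone cannot give graphicality (the tilt integral $\int_{R_0}^\infty \tfrac{dr}{4r}$ diverges), and that the whole difficulty is a uniform decay at infinity. The genuine gap is your final step: the assertion that a fixed amount $\epsilon$ of total curvature escaping along $\Sigma_j$ to radii $\rho_j\to\infty$ ``would contradict'' the simultaneous smooth and varifold convergence of Theorem \ref{compactnessofindex1} together with the uniform total curvature bound. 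Both modes of convergence are purely local --- varifold convergence is tested against compactly supported functions, smooth convergence holds on compact sets --- so they are entirely consistent with $\int_{\Sigma_j- B_{\rho_j}(0)}|A|^3=\epsilon$ for all $j$; neither the limit $\Sigma_\infty$ nor the upper bound $\int_{\Sigma_j}|A|^3\le\Lambda$ can see such a tail. This is precisely the ``neck drifting to infinity'' scenario of Example \ref{costa}; what excludes it is the index-$1$ hypothesis, and that hypothesis must be brought to bear \emph{at the drifting scale}, not at unit scale. Moreover, even if you rescale by $\rho_j$ and rerun the blow-down machinery of the Claim (Allard, Sharp's Jacobi field, stability, plane), the escaped curvature can escape again in the rescaled picture, so the no-concentration statement cannot be closed by this kind of soft argument alone: some mechanism must terminate the recursion.

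The missing mechanism is to run the contradiction at the critical radius where graphicality first fails, using the self-improving nature of exterior minimal graphs. Let $G_j$ be the least radius such that $\Sigma_j-B_{G_j}(0)$ is a union of graphs of slope $\le\delta_0$ over hyperplanes ($G_j<\infty$ by regularity at infinity), suppose $G_j\to\infty$, and rescale $\hat\Sigma_j:=G_j^{-1}\Sigma_j$. The Claim's blow-down argument shows $\hat\Sigma_j$ converges to a plane $P$, smoothly away from the origin. Outside $B_1(0)$, each $\hat\Sigma_j$ is by construction a union of small-slope exterior solutions of the minimal surface equation, and for such solutions the decay $|\nabla u|\le C\delta_0|x|^{-2}$ holds with a universal constant (Schoen's asymptotic expansion, \cite{schoen1983}); this pins their asymptotic planes to $P$ via the smooth convergence on $B_4\setminus B_2$. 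On $B_2\setminus B_s$, $s<1$, smooth convergence to $P$ makes $\hat\Sigma_j$ a union of small-slope graphs over $P$, and since $|A_{\hat\Sigma_j}|\,|x|\le\tfrac14$ outside $B_{R_0/G_j}$, Proposition \ref{brian} makes each component of $\hat\Sigma_j-B_s(0)$ an annulus, so the inner and outer graphical descriptions patch into a single graph over $P$ on all of $\hat\Sigma_j-B_s(0)$. That contradicts the minimality of $G_j$ and proves the corollary --- note this route bypasses tail-total-curvature smallness entirely, which is fortunate, since a uniform bound $\int_{\Sigma}|A|^3\le\Lambda$ together with $\epsilon$-regularity only gives a modulus of decay for $|A|\,|x|$, not the summable rate your tilt estimate would require.
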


\section{Finite diffeomorphism types of minimal hypersurfaces in $\RR^4$ with Euclidean volume growth and bounded index}

Recently, Chodosh-Ketover-Maximo proved the following finiteness diffeomorphism result of minimal hypersurfaces $\Sigma^{n-1}\subset \RR^n$, $4\le n\le 7$.

\begin{theo}[\cite{chodosh2015minimal}]
For $4\le n\le 7$, there is $N=N(n,I,\Lambda)\in \NN$ so that there are at most $N$ mutually non-diffeomorphic complete embedded minimal hypersurfaces $\Sigma^{n-1}\subset \RR^n$ with $\Index(\Sigma)\le I$ and $\Vol(\Sigma\cap B_R(0))\le \Lambda R^{n-1}$ for all $R>0$.
\end{theo}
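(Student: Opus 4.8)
The plan is to argue by contradiction via a compactness and bubbling analysis in the spirit of Chodosh--Ketover--Maximo. Suppose the conclusion fails for some fixed $n\in\{4,\dots,7\}$, $I$, $\Lambda$; then there is a sequence $\{\Sigma_j^{n-1}\subset\RR^n\}$ of complete embedded minimal hypersurfaces, pairwise non-diffeomorphic, with $\Index(\Sigma_j)\le I$ and $\Vol(\Sigma_j\cap B_R(0))\le\Lambda R^{n-1}$ for all $R$. By Tysk's theorem, in this dimension range finite index together with Euclidean volume growth forces each $\Sigma_j$ to have finite total curvature, hence (by Anderson) $\Sigma_j$ is diffeomorphic to a closed manifold $\bar\Sigma_j$ minus finitely many points and is regular at infinity. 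Since each end is asymptotically a hyperplane of multiplicity one, its volume in $B_R(0)$ is $(1+o(1))\,\omega_{n-1}R^{n-1}$; summing and comparing with the bound $\Lambda R^{n-1}$ shows the number of ends of every $\Sigma_j$ is at most $C(\Lambda)$. I will derive a contradiction by proving that infinitely many of the $\Sigma_j$ must in fact be diffeomorphic.

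The next step is a uniform total curvature bound $\int_{\Sigma_j}|A|^{n-1}\le C(n,I,\Lambda)$. Here I would use the scale-invariant $\varepsilon$-regularity that follows from the Schoen--Simon curvature estimates for stable minimal hypersurfaces, available precisely for $4\le n\le 7$ because stable hypersurfaces are smooth in this range. The point is that any ball on which the scale-invariant total curvature exceeds a universal $\varepsilon_0$ must support a direction of negative second variation; a Vitali-type covering argument then bounds the number of such concentration balls by $C\cdot I$, while on the complementary stable region the curvature estimate combined with the volume ratio $\Lambda$ and the regular-at-infinity decay controls the remaining total curvature. This is the analytic heart of the argument and is what ties the index and volume hypotheses together.

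With uniform total curvature in hand, I would extract the limit. Away from a finite set $\{p_1,\dots,p_m\}$ of curvature-concentration points, with $m\le C\cdot I$, interior elliptic estimates and Arzel\`a--Ascoli give smooth graphical subconvergence to a minimal hypersurface $\Sigma_\infty$, upgraded by the volume bound and Allard's compactness theorem to varifold convergence (possibly with multiplicity). At each $p_i$ I would rescale to produce finitely many nonflat \emph{bubbles}, each a complete minimal hypersurface in $\RR^n$ of index $\le I$ and finite total curvature. A gap theorem gives each nonflat bubble total curvature at least a universal $\delta_0>0$, so boundedness of total curvature caps the number and complexity of bubbles. By Anderson's theorem, $\Sigma_\infty$ and every bubble is a closed manifold minus finitely many points with first Betti number and number of ends bounded by their total curvature; hence there are only finitely many possible diffeomorphism types among the limit and bubbles, and finitely many combinatorial patterns describing which bubble attaches at which point.

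The main obstacle is the final, topological, step: showing that for $j$ large the diffeomorphism type of $\Sigma_j$ is \emph{determined} by the discrete data (the type of $\Sigma_\infty$, the types of the bubbles, and the attaching combinatorics). This requires a surgery/gluing argument realizing $\Sigma_j$ as a connected sum of the compactifications of $\Sigma_\infty$ and the bubbles along neck regions, and the delicate issue is ruling out topology hidden in the necks. On each neck the total curvature is small, so the scale-invariant curvature is small and White's observation (Proposition \ref{brian}) applies: $|x|^2$ has no critical points there, and Morse theory forces each neck to be a standard cylinder $S^{n-2}\times[0,1]$. Once the necks are identified as trivial annuli, the diffeomorphism type of $\Sigma_j$ depends only on the finitely many discrete invariants above, so infinitely many of the $\Sigma_j$ share a diffeomorphism type, contradicting their choice. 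The restriction $4\le n\le 7$ enters twice: through the Schoen--Simon estimates (no stable singular cones) used for the curvature bound, and through Tysk's regularity at infinity used throughout.
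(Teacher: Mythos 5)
First, a point of comparison: the paper does not prove this theorem at all --- it is quoted verbatim from \cite{chodosh2015minimal} and used as a black box (the paper's own contribution in Section 7 is only the observation that Theorem \ref{precise} converts an index bound into a volume-growth bound in $\RR^4$, so that the hypothesis $\Vol(\Sigma\cap B_R(0))\le \Lambda R^{n-1}$ can be dropped). So there is no internal proof to compare against; your sketch is an attempted reconstruction of the Chodosh--Ketover--Maximo argument itself. At the level of architecture it is faithful to their strategy: contradiction, bounded number of curvature-concentration regions via an index-versus-instability counting argument, bubbling with a gap theorem, Anderson's compactification for the limit and the bubbles, and a surgery step identifying the necks as topologically trivial.

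There are, however, two places where the sketch as written has genuine gaps. The first is the uniform total curvature bound $\int_{\Sigma_j}|A|^{n-1}\le C(n,I,\Lambda)$. Your justification --- Schoen--Simon estimates on the stable complement combined with the volume ratio and ``regular-at-infinity decay'' --- does not close: the stable-region estimate gives only $|A|(x)\lesssim d(x)^{-1}$ with $d$ the distance to the unstable set, and integrating $d^{-(n-1)}$ against Euclidean volume growth produces $C\Lambda\log R$ on dyadic annuli, which diverges; meanwhile the regular-at-infinity constants $C_j$ for the individual $\Sigma_j$ are not uniform in $j$, so they cannot be invoked either. Obtaining the uniform bound is precisely the hard multi-scale analysis in \cite{chodosh2015minimal}, not a consequence of the ingredients you list. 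The second gap is related: your limit extraction fixes finitely many concentration points $p_1,\dots,p_m$ in a single compact picture, but concentration regions can drift to infinity along the sequence --- the paper's Example \ref{costa} exhibits exactly this for a bounded-index family, with necks escaping every ball $B_R(0)$ after normalization. Hence the bubbling and the neck-surgery step must be carried out at varying base points and scales, and the appeal to Proposition \ref{brian} on each neck requires first locating the necks and establishing pointwise curvature smallness there uniformly, which again is the content of the CKM neck analysis rather than an input to it. In short: the outline is the right one, but the two steps you flag as ``the analytic heart'' and ``the main obstacle'' are exactly where the proof lives, and the justifications offered for them would fail as stated.
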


Recall that a minimal hypersurface $\Sigma^{n-1}\subset\RR^n$ with Euclidean volume growth and finite index must be regular at infinity. By the monotonicity formula, the volume growth rate $\lim_{R\rightarrow \infty}\Vol(\Sigma\cap B_R(0))/\omega_{n-1} R^{n-1}$ is equal to the number of ends. When $n=4$, theorem \ref{precise} provides an upper bound of the number of ends in terms of the index. In fact, a minimal hypersurfaces $\Sigma^3\subset\RR^4$ with Euclidean volume growth and index $I$ must satisfy $\Vol(\Sigma\cap B_R(0))\le (6I+7)\omega_{n-1} R^{n-1}$. As a result, we have

\begin{theo}
There exists $N=N(I)$ such that there are at most $N$ mutually non-diffeomorphic complete embedded minimal hypersurfaces $\Sigma^3$ in $\RR^4$ with Euclidean volume growth and $\ind(\Sigma)\le I$.
\end{theo}

\appendix
\section{Minimal hypersurfaces with finite total curvature}

	In the appendix we give a brief explanation of proposition 2.4. Namely, we prove that immersed minimal hypersurfaces with finite total curvature in $\RR^n$ is regular at infinity. The proof we include here is a generalization of \cite{tysk1989finiteness} and \cite{anderson1984compactification}.  We refer the readers to their original papers for more details.
	
	\begin{proof}
	Let $\Sigma^{n-1}\subset \RR^n$ be a minimal hypersurface with finite total curvature, i.e. $\int |A|^{n-1}< \infty$. We prove that $\Sigma$ has finitely many embedded ends, each of which is a graph over some affine plane of some function $u$ 
	\[|x|^{n-3}|u(x)|+|x|^{n-2}|\nabla u(x)|+|x|^{n-1}|\nabla^2 u(x)|\le C.\]
	The proof can be divided into several steps as follows.
	
	\begin{itemize}
    \item[Step 1] A $\epsilon$ regularity theorem for $|A|$. 
    
    Consider an annulus $A(R,2R)=B(0,2R)-B(0,R)$. We have the following: 
    \begin{prop}
    	There is an $\epsilon_0>0$ such that if $\int_{A(R,2R)\cap \Sigma}|A|^{n-1}<\epsilon_0$ then 
    	\[\sup_{x\in \partial B(0,R)\cap \Sigma}|A|^2(x)\le \frac{1}{R^2}\mu\left(\int_{A(R,2R)\cap \Sigma}|A|^{n-1}.\right)\]
    	Here $\mu$ is a continuous function satisfying $\mu(\epsilon)\rightarrow 0$ as $\epsilon\rightarrow 0$.
    \end{prop}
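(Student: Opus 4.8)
The plan is to prove a scale-invariant $\epsilon$-regularity estimate of Choi--Schoen type, based on Simons' identity together with the Michael--Simon Sobolev inequality. Since $\int_\Sigma|A|^{n-1}$ is scale invariant while $|A|^2$ rescales by $R^{-2}$, I first reduce to the case $R=1$ via the dilation $x\mapsto x/R$; it then suffices to bound $\sup_{\partial B(0,1)\cap\Sigma}|A|^2$ by $\mu\big(\int_{A(1,2)\cap\Sigma}|A|^{n-1}\big)$ whenever the integral is below a dimensional threshold $\epsilon_0$. Set $w=|A|^2$, which is smooth and nonnegative since $A$ is smooth. Simons' identity for a minimal hypersurface in $\RR^n$ gives $\tfrac12\Delta w=|\nabla A|^2-|A|^4$, and discarding the nonnegative gradient term yields the differential inequality
\[\Delta w\ge -2w^2 \qquad\text{on }\Sigma.\]

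The heart of the argument is a Moser iteration for this subsolution, run on a fixed dilate of the annulus so that the intrinsic balls about points of $\partial B(0,1)\cap\Sigma$ are contained in the region where the total curvature is controlled. For $p\ge 1$ and a cutoff $\phi$, I test the inequality against $\phi^2 w^{p-1}$ and integrate by parts to bound $\int|\nabla(\phi w^{p/2})|^2$ in terms of a gradient-of-cutoff term and the nonlinear term $\int\phi^2 w^{p+1}$. Writing $\int\phi^2 w^{p+1}=\int(\phi w^{p/2})^2\,w$ and applying Hölder with exponents $\tfrac{n-1}{n-3}$ and $\tfrac{n-1}{2}$, followed by the Michael--Simon Sobolev inequality \cite{michaelsimonsobolev} (valid since $\dim\Sigma=n-1>2$, using minimality to drop the mean-curvature term), the nonlinear term is dominated by
\[\Big(\int_{\supp\phi}w^{(n-1)/2}\Big)^{2/(n-1)}\,\big\|\nabla(\phi w^{p/2})\big\|_{L^2}^2 .\]
Because the scale-invariant quantity $\big(\int|A|^{n-1}\big)^{2/(n-1)}$ is small, this term is absorbed into the left-hand side. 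The resulting reverse inequality, combined with the Sobolev embedding, upgrades $\|w\|_{L^{q}}$ to $\|w\|_{L^{\kappa q}}$ with $\kappa=\tfrac{n-1}{n-3}>1$; iterating over the powers $q_k=\kappa^k\cdot\tfrac{n-1}{2}$ with a shrinking sequence of cutoffs produces the mean-value bound
\[\sup_{\partial B(0,1)\cap\Sigma}w\le C(n)\Big(\int_{A(1,2)\cap\Sigma}|A|^{n-1}\Big)^{2/(n-1)} .\]

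Undoing the scaling restores the factor $R^{-2}$, so the proposition holds with $\mu(\epsilon)=C(n)\,\epsilon^{2/(n-1)}$, which is continuous and tends to $0$ as $\epsilon\to0$. The main obstacle is closing the iteration: the nonlinear term $\int\phi^2 w^{p+1}$ has exactly the critical homogeneity, and it can be absorbed into the Dirichlet energy only because $\int|A|^{n-1}$ is assumed small --- this is precisely the role of the threshold $\epsilon_0$, and it is where the hypothesis $n\ge 4$ (so that the Sobolev exponent $\tfrac{n-1}{n-3}$ exceeds $1$) enters. Care is also needed to keep the iteration constants uniform as $p\to\infty$ and to ensure the balls used around points of $\partial B(0,R)\cap\Sigma$ stay within the annular region; both are handled by the standard bookkeeping of Choi--Schoen and Tysk \cite{tysk1989finiteness}.
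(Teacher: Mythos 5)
Your core argument is the right one, and it is exactly what the paper intends: the paper's own ``proof'' is a single sentence (``a direct consequence of Simons' equality and the scaling invariance of $\int|A|^{n-1}$''), and your Moser iteration --- Simons' inequality $\Delta|A|^2\ge-2|A|^4$, testing with $\phi^2w^{p-1}$, H\"older with exponents $\tfrac{n-1}{n-3}$ and $\tfrac{n-1}{2}$, the Michael--Simon Sobolev inequality with $m=n-1>2$, and absorption of the critical term using the smallness threshold $\epsilon_0$ --- is the standard way to make that sentence precise. The exponents all check, and the conclusion $\mu(\epsilon)=C(n)\epsilon^{2/(n-1)}$ has the required form.

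The genuine gap is the step you dismiss as ``standard bookkeeping.'' The supremum in the statement is taken over $\partial B(0,R)\cap\Sigma$, which is the \emph{inner boundary} of the region $A(R,2R)$ where the total curvature is assumed small. Any Moser iteration (indeed any interior elliptic estimate) at a point $x_0\in\partial B(0,R)\cap\Sigma$ needs control on a ball around $x_0$, and such a ball necessarily enters $B(0,R)$, where the hypothesis gives no information. Your proposed fix of running the iteration ``on a fixed dilate of the annulus'' does not repair this: any dilate $A(\lambda R,2\lambda R)$ containing a neighborhood of $\partial B(0,R)$ has $\lambda<1$ and so includes part of $B(0,R)$, outside the region of assumed smallness. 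Nor is this a removable technicality: one-sided estimates of this type fail. For example, place a catenoid with neck radius $a=\epsilon^{1+1/((n-1)(n-2))}$ so that its center lies at distance $d=\epsilon$ inside $\partial B(0,1)$, with its asymptotic hyperplane containing the radial direction; then using $|A|\sim a^{n-2}r^{-(n-1)}$ at distance $r$ from the neck, one finds $\int_{A(1,2)\cap\Sigma}|A|^{n-1}\lesssim (a/d)^{(n-1)(n-2)}\le\epsilon$, while at points of $\partial B(0,1)\cap\Sigma$ at distance $\approx d$ from the neck, $|A|\sim a^{n-2}/d^{n-1}=\epsilon^{-(n-2)/(n-1)}\to\infty$. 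So no universal $\mu$ with $\mu(\epsilon)\to0$ can work with the supremum on $\partial B(0,R)$. The estimate your argument actually proves --- and the one the appendix really uses in Step 1, where only $|A|(x)\,|x|\to0$ is needed and one may apply the estimate to the annuli $A(2R/3,4R/3)$ --- is the interior version: supremum over a middle sphere such as $\partial B(0,3R/2)\cap\Sigma$, or equivalently the integral taken over an enlarged annulus such as $A(R/2,4R)$. This defect originates in the paper's formulation rather than in your method, but you should state and prove the interior version explicitly instead of asserting that the boundary version follows by bookkeeping.
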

	This proposition is a direct consequence of the Simons' equality and the scaling invariance of $\int |A|^{n-1}$. As a consequence, since the total curvature is finite, we deduce that 
	\[|A|(x)\cdot |x|\rightarrow 0, \quad \textrm{as $|x|\rightarrow \infty$}.\]
	
	\item[Step 2] Euclidean volume growth.
	
	Recall that by \cite{tysk1989finiteness} the Morse index of $\Sigma$ is bounded from above by a constant times the total curvature, and in particular, the index of $\Sigma$ is finite. We then conclude by \cite{li2002minimal} that $\Sigma$ has finitely many ends. We study the asymptotic behavior of each end.
	
	By the Morse-theoretic argument in \cite{white1987curvature}, we see that for $R>R_0$, the distance function $r=\dist(\cdot, 0)$ is a Morse function with no critical point. Take $R$ sufficiently large such that $\Sigma\cap \partial B_R(0)$ transversely, and each connected component of $\Sigma\cap \partial B_R(0)$ corresponds to an end of $\Sigma$. We first prove that $\Sigma-B_{R_0}(0)$ is a collection of finitely many ends, each diffeomorphic to $S^{n-2}\times [0,\infty)$. In particular, this shows that $\Sigma$ is properly immersed with finite topology. Let us look at one end $V$.
	
	Let $V_{r_0}=\partial B_{r_0}(0)\cap V$ to be level set of $r$ on $V$. Since
	\[(\nabla^{\Sigma })^2  r(X,Y)=(\nabla^{\RR^n})^2 r(X,Y)-\bangle{A(X,Y),\nabla^{\RR^n}r },\]
	and that $|A|(r)\le \frac{\mu_1(r)}{r}$, where $\mu_1$ is the continuous function obtained in step 1, that converges to zero as $r$ tends to infinity, we conclude that
	\[|A_{V_r}-\frac{1}{r}I|\le \frac{\mu_1(r)}{r},\]
	where $A_{V_r}$ is the second fundamental form of $V_r$ inside $V$. By the Gauss equation we conclude that
	\begin{equation}\label{curvature}
	|K_{V_r}(x,P)-\frac{1}{r^2}|\le\frac{2\mu_1(r)}{r}
	\end{equation}
	for every point $x$ on $V_r$ and every tangential two-plane $P$ in $T_xV_r$, where $K_{V_r}$ is the sectional curvature. Since $\mu_1(r)\rightarrow 0$ as $r\rightarrow \infty$, $V_r$ is diffeomorphic to the standard sphere of dimension $n-2$. Combining this with the Morse-theoretic argument, we conclude that each end of $\Sigma$ is diffeomorphic to $S^{n-1}\times [0,1)$. Therefore $\Sigma$ is properly immersed with finite topology. 
	
	Let us prove that $\Sigma$ has Euclidean volume growth. By the curvature condition \ref{curvature} and the standard volume comparision, we see that for each level surface $V_r$,
	\[1-2\mu_1(r)<\frac{\Vol(V_r)}{\omega_{n-2}r^{n-2}}-1<1+2\mu_1(r).\]
	Hence by the co-area formula we conclude that $V$ has Euclidean volume growth, i.e. 
	\[\lim_{R\rightarrow \infty}\frac{\Vol(V\cap B_R(0))}{\omega_{n-1}R^{n-1}}=1.\]
	
	Since $\Sigma$ has only finitely many ends at infinity, $\Sigma$ also has Euclidean volume growth.
	
	\item[Step 3] Tangent plane at infinity.
	
	Having Euclidean volume growth, consider the varifold limit $\Sigma_\infty$ of rescaled surfaces $\frac{1}{r}\Sigma$. By slight abuse of notation we also use $\Sigma_\infty$ to denote the support of this varifold. For any positive $\delta>0$, $\Sigma_\infty-B_\delta(0)$ is the limit of minimal hypersurfaces whose $|A|$ converges to $0$ uniformly. Therefore $\Sigma_\infty$ is totally geodesic outside of $0$, i.e. $\Sigma_\infty$ is a union of hyperplanes through $0$. 
	
	Choose a radius $r_0$ sufficiently large such that for any $r>r_0$ the curvature condition \ref{curvature} holds. Consider an end $V\setminus B_{r_0}(0)$ and its rescalings $\{\frac{1}{R}(V\setminus B_{r_0}(0))\}$. Previously we know that as $R\rightarrow \infty$ these rescaled ends converge subsequentially to $P\setminus \{0\}$ for some hyperplane $P$. By the curvature condition \ref{curvature} we know that 
	\[\lim_{R\rightarrow \infty}\frac{\Vol((V\setminus B_{r_0}(0))\cap B_R(0))}{\omega_{n-1}R^{n-1}}=1.\]
	Since the varifold convergence does not increase the density at infinity, the convergence
	\[\frac{1}{R}(V\setminus B_{r_0}(0))\rightarrow P\setminus \{0\}\]
	is in multiplicity one.
	
	Next we point out that the plane $P$ does not depend on the choice of the subsequence in the convergence, that is, each end has a unique tangent plane at infinity. To see this, first note that since $\Sigma$ has finite index, the end $V\setminus B_{r_0}(0)$ is stable for sufficiently large radius $r_0$. Also since for each sequence of $R_i\rightarrow \infty$ there is a subsequence of $\{\frac{1}{R_i}(V\setminus B_{r_0}(0))\}$ converging to some hyperplane of multiplicity one, by Lemma 3 of \cite{tysk1989finiteness} we conclude that this limiting hyperplane is unique.
	
	This means that the each end of the original surface $\Sigma$ has a unique tangent plane at infinity.
	
	\item[Step 4] Regular at infinity.
	
	Since each end of $\Sigma$ converges to a hyperplane of multiplicity one, we deduce that outside some compact set, each end is a graph of bounded slope over the tangent plane at infinity. According to proposition 3 of \cite{schoen1983}, this end is regular at infinity.

	\end{itemize}

	\end{proof}

\bibliography{bib}
\bibliographystyle{amsalpha}

\end{document}